\documentclass[11pt, reqno]{amsart}
\usepackage[bookmarks]{hyperref}
\usepackage{amsmath,amsfonts,mathrsfs,amsthm}
\usepackage{color}

\newtheorem{thm}{Theorem}[section]

\newtheorem{cor}[thm]{Corollary}
\newtheorem{lem}[thm]{Lemma}

\theoremstyle{definition}
\newtheorem{remark}{Remark}

\numberwithin{equation}{section}

\usepackage[margin = 0.5in]{geometry}
\usepackage[skip = 10pt,indent = 0pt]{parskip}

\newcommand{\real}{\mathbb{R}}
\newcommand{\integ}{\mathbb{Z}}

\DeclareMathOperator{\Vol}{Vol}

\newcommand{\dd} {\mathrm{d}}

\DeclareMathOperator{\supp}{supp}
\DeclareMathOperator{\dv}{div}

\makeatletter
\newcommand{\labelt}[1]{%
  \phantomsection
  \renewcommand{\@currentlabel}{#1}
  \label{#1}
}
\makeatother

\title[the divergence equation]{Real Analytic solutions to the divergence equation}

\author[Chan]{Chi Hin Chan}
\address{Department of Applied Mathematics, National Yang Ming Chiao Tung University, No. 1001, Daxue Rd. East Dist., Hsinchu City 300093, Taiwan}
\email{cchan@math.nctu.edu.tw}

\author[Chen]{Jun-Shuo Chen}
\address{Department of Mathematics,
National Tsing Hua University\\ 101, Section 2, Kuang-Fu Road, Hsinchu 300044, Taiwan}
\email{jschen96@gmail.com}

\author[Su]{Cheng-Fang Su}
\address{Department of Applied Mathematics, National Yang Ming Chiao Tung University, No. 1001, Daxue Rd. East Dist., Hsinchu City 300093, Taiwan}
\email{scf1204@nycu.edu.tw}

\begin{document}
\begin{abstract}
In this paper, we develop a differential-topological method to yield explicit real analytic solutions $v$ to the divergence equation $\dv_{\mathbb{R}^n} v = f$ on any annali $A(R_1 ,R_2) = \{ x \in \mathbb{R}^n : R_1 < |x| <  R_2\}$, with $n \geq 2$, and  $0 < R_1 < R_2 < \infty$. The prescribed source term $f$ is supposed to be real analytic on $\overline{A(R_1 , R_2)} = \{ x \in \mathbb{R}^n : R_1 \leq |x| \leq  R_2\}$ satisfying $\int_{A(R_1 , R_2)} f \Vol_{\mathbb{R}^n} = 0$. The resulting solution $v$ is a real analytic vector field on $\overline{A(R_1 , R_2)}$, which vanishes on $\partial \big(  A(R_1, R_2 ) \big )$. The method which we develop here is  different from the standard Bogovski approach \cite{Bogovski1979} and the Kapitanskii-Pileckas approach \cite{KP}. The first main step our method is a clever differential-topological argument, which we develop under the inspiration and guidance of the standard proof of the cohomological statement $H_c^n \big ( \mathbb{R}^n\big ) = \mathbb{R}$ in Spviak book \cite{Spivak}\emph{A Comprehensive Introduction to Differential Geometry, Vol I}. This allows us to reduce the problem to that of solving a linear algebra problem.
\end{abstract}
\maketitle

\section{Introduction of our paper.}
The aim of this paper is to construct explicit real analytic solutions to the divergence equation $\dv_{\mathbb{R}^n} \textbf{U} = f$ on a region $A(R_1 , R_2 ) = \{ x \in \mathbb{R}^n : R_1 < |x| < R_2  \}$, where the real analyticity of the prescribed $f$ and the real analyticity of the solution $\textbf{U}$ which we seek both are supposed to be on $\overline{A(R_1, R_2)}$. The main result of our paper is stated in Theorem \ref{LooseForm} and Theorem \ref{TheMainTheoremWehave}.

\begin{thm}\label{LooseForm}
Consider $n \in \mathbb{Z}^+$ with $n \geq 2$, and $0< R_1 < R_2 < \infty$. Let $f : \overline{A(R_1, R_2)}\rightarrow \mathbb{R}$ be real analytic on $\overline{A(R_1 , R_2)}$ and satisfies $\int_{\overline{A(R_{1},R_2)}} f \Vol_{\mathbb{R}^n} = 0$. Then, it follows that there exists a vector field $\textbf{U} : \overline{A(R_1 , R_2)} \rightarrow \mathbb{R}^3$, which is real analytic on $\overline{A(R_1 ,R_2)}$, such that
\begin{equation}\label{plaineasy1}
\dv_{\mathbb{R}^n} \textbf{U} (x) = f(x) ,
\end{equation}
holds for all $x \in \overline{A(R_1 , R_2 )}$, and that
\begin{equation}\label{plaineasy2}
\textbf{U} \big |_{\partial (A (R_1 , R_2))} = 0.
\end{equation}
\end{thm}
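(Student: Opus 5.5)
We follow the Spivak-type argument that $H^n_c$ is one-dimensional, working in polar coordinates $x = r\omega$, $r \in [R_1,R_2]$, $\omega \in S^{n-1}$. The plan is to split $\textbf{U}$ into a radial part and a tangential part. The radial part moves the mass of $f$ onto a fixed "bump" profile in $r$. The tangential part then solves a divergence problem on the sphere $S^{n-1}$ whose source has spherical mean zero, uniformly in $r$.

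\emph{Step 1 (radial reduction).} Define the spherical average $\bar f(r) = \frac{1}{|S^{n-1}|}\int_{S^{n-1}} f(r\omega)\,\dd\omega$; it is real analytic on $[R_1,R_2]$. Write $f = (f - \bar f) + \bar f$. For the radial piece we seek $\textbf{U}_1 = \frac{\phi(r)}{r^{n-1}}\,\omega$. Its divergence is $r^{1-n}\phi'(r)$, so we need $\phi(r) = \int_{R_1}^r s^{n-1}\bar f(s)\,\dd s$. Then $\phi(R_1)=0$, and $\phi(R_2)=0$ exactly because $\int_{A} f = 0$. The expression $\textbf{U}_1 = \phi(|x|)\,x/|x|^n$ is real analytic on the closed annulus, since $|x|$ is analytic away from $0$, and it vanishes on $\partial A$.

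\emph{Step 2 (tangential part).} It remains to solve $\dv \textbf{U}_2 = g := f - \bar f$, where $\int_{S^{n-1}} g(r\omega)\,\dd\omega = 0$ for each $r$, and $\textbf{U}_2$ must vanish at $r=R_1,R_2$. The difficulty is that $g$ need not vanish on $\partial A$, while a tangential field $\textbf{U}_2$ has divergence $r^{-1}\dv_{S^{n-1}}(\cdot)$. A field vanishing on $\partial A$ would force a purely tangential field to have zero divergence there. We therefore need a mixed construction. Write $g = g_0 + g_1$, where $g_1(r\omega)$ is the part coming from polynomial interpolation in $r$ of the boundary traces. Concretely, choose $\textbf{U}_2 = (r-R_1)(R_2-r)\,\textbf{W} + \textbf{V}$, where $\textbf{V}$ has a radial component $\psi(r,\omega)\omega$ with $\psi$ vanishing at both ends, so that $\partial_r\psi$ can absorb the boundary values of $g$. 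Specifically, take $\psi(r,\omega) = r^{1-n}\int_{R_1}^r s^{n-1} g(s\omega)\,\dd s$. Its divergence is $g$ exactly, it vanishes at $R_1$, and its value at $R_2$ is $h(\omega) := R_2^{1-n}\int_{R_1}^{R_2} s^{n-1}g(s\omega)\,\dd s$, which has mean zero on $S^{n-1}$. To cancel this value, subtract $\chi(r) h(\omega)\omega$, with $\chi$ a polynomial satisfying $\chi(R_1)=0$, $\chi(R_2)=1$. This introduces the new source $-\dv(\chi h \omega) = -r^{1-n}(r^{n-1}\chi)'h(\omega)$, which is of product form with mean-zero $h$. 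We cancel it with a tangential field $\theta(r)\,\nabla_{S^{n-1}}\Phi(\omega)$, where $\Delta_{S^{n-1}}\Phi = h$. Here $\theta(r) = r\cdot r^{1-n}(r^{n-1}\chi)'$ times a polynomial factor would not vanish at the ends. To fix this, choose $\chi$ so that $(r^{n-1}\chi)'$ vanishes at $R_1$ and $R_2$, i.e.\ impose $\chi(R_1)=0$, $\chi(R_2)=1$, $\chi'(R_1)=0$, $(r^{n-1}\chi)'(R_2)=0$. This is a Hermite interpolation problem, which is the linear algebra step mentioned in the abstract, solvable by a cubic.

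\emph{Step 3 (analyticity on the sphere).} Solve $\Delta_{S^{n-1}}\Phi = h$ for mean-zero real analytic $h$. Elliptic regularity with analytic coefficients (Morrey--Nirenberg) gives $\Phi$ real analytic on $S^{n-1}$. Extend all pieces $0$-homogeneously via $\omega = x/|x|$; the resulting fields are analytic on the closed annulus. The main obstacle is this spherical Poisson step, together with checking that the product of analytic functions of $r$ and $\omega$ is analytic. The latter is routine, and for $n=2$ the spherical step is simply integration on the circle. Summing $\textbf{U}_1$ and the corrected $\textbf{V}$ then gives a $\textbf{U}$ satisfying \eqref{plaineasy1} and \eqref{plaineasy2}.
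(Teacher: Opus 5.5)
Your proof is correct. Its core, Step 2, is the vector-field form of the paper's Lemma~\ref{keylemma}. Under $V\mapsto \iota_V\Vol_{\mathbb{R}^n}$, your field $\psi\,\omega$ is Spivak's form $\eta$. Your divergence-free correction $-\chi h\,\omega+\theta\,\nabla_{S^{n-1}}\Phi$ is the exact form $-\dd_{\mathbb{R}^n}(\chi\,\mathbf{r}^*\beta)$ built from the Poisson equation on $S^{n-1}$. If you replace your $\chi$ by $(r/R_2)^{n-1}\chi$, your Hermite conditions become exactly the paper's $\chi(R_1)=\chi'(R_1)=0$, $\chi(R_2)=1$, $\chi'(R_2)=0$; the paper realizes these with $\sin^{l+3}$ instead of a cubic.

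The real difference is the preprocessing. The paper first subtracts from $f$ a function $Q$ that is quadratic in $\rho$, with coefficients linear in the traces $f(R_1y)$ and $f(R_2y)$. These coefficients come from the linear system of Lemma~\ref{lem:s.4}, whose solvability rests on the positivity of an explicit polynomial in $R_2/R_1$. The choice makes $f-Q$ vanish on $\partial A(R_1,R_2)$ and gives $\int_{R_1}^{R_2}Q(\rho y)\rho^{n-1}\dd\rho=0$ for every $y$. Because $f-Q$ vanishes on the boundary, its zero extension lies in $C^0_c(\mathbb{R}^n)$. Lemma~\ref{keylemma}, with the regularity Lemmas~\ref{lem:S.2} and~\ref{lem:s.3}, then produces a field $\widetilde v$ that is $C^1$ on all of $\mathbb{R}^n$ and supported in the closed annulus. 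The remaining piece $Q$ is absorbed by the radial field $\lambda\,\partial_\rho$.

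You never leave the closed annulus, and you enforce the boundary condition directly through the endpoint conditions on $\chi$, so you need none of Lemmas~\ref{lem:S.2}--\ref{lem:s.4}. Your route is shorter and delivers exactly what Theorem~\ref{LooseForm} asks. What it gives up is the paper's extra information that the part solving $\dv\widetilde v=f-Q$ extends by zero to a $C^1$ field on $\mathbb{R}^n$.

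Minor points:
\begin{itemize}
\item Step 1 is superfluous. Solvability of $\Delta_{S^{n-1}}\Phi=h$ needs only $\int_{S^{n-1}}h=0$, which already follows from $\int_{A(R_1,R_2)}f=0$ if you run Step 2 on $f$ itself.
\item Your cubic Hermite interpolation is not the linear-algebra step referred to in the abstract; that step is Lemma~\ref{lem:s.4}.
\item The abandoned ansatz with $g_0$, $g_1$ and $(r-R_1)(R_2-r)\textbf{W}$ is never used and should be deleted.
\end{itemize}
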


\begin{remark}
In general, we say that a function $f : \overline{A(R_1, R_2)} \rightarrow \mathbb{R}$ is real analytic on $\overline{A(R_1, R_2 )}$ if there exists some $\delta \in (0,1)$, together with some real analytic function $\widetilde{f} : A(R_1 (1-\delta ) , R_2 (1 + \delta )) \rightarrow \mathbb{R}$ on $A(R_1 (1-\delta ) , R_2 (1 + \delta )) $ such that we have 
\begin{equation*}
\widetilde{f} \big |_{\overline{A(R_1, R_2 )}} = f .
\end{equation*}
This concept of real analyticity over $\overline{A(R_1, R_2)}$ clearly also applies to $\mathbb{R}^n$-valued function on $\overline{A(R_1, R_2)}$. Thus, it makes sense to talk about real analytic vector fields on $\overline{A(R_1, R_2)} $.
\end{remark}

Theorem \ref{LooseForm} only provides a loose form of the actual result, namely Theorem \ref{TheMainTheoremWehave}, of this paper $($see Remark \ref{firstremark} $)$. Theorem \ref{TheMainTheoremWehave} is the one which contains all the precise details useful for the analysis of PDE. Due to the technical nature of Theorem \ref{TheMainTheoremWehave}, we first spell out the essense of Theorem \ref{TheMainTheoremWehave} in a more readable form of Theorem \ref{LooseForm}. A reader who takes a strong interest in the analytical details of our work should jump directly to the statement of Theorem \ref{TheMainTheoremWehave}.

\begin{remark}\label{firstremark}
In the standard framework of $f \in L^{p} (\Omega )$, with $1 < p < \infty$ and $\int_{\Omega} f \Vol_{\mathbb{R}^n} = 0$, where $\Omega$ is assumed to be a bounded Lipschitz domain in $\mathbb{R}^n$ with $n \geq 2$, a solution $\textbf{U} \in W_0^{1,p} (\Omega )$ to \eqref{plaineasy1} and \eqref{plaineasy2} is in general not unique, and the standard Bogovski's map approach \cite{Bogovski1979} only provides one possible $W^{1,p}_0(\Omega )$-solutions among many others. Indeed, this comment applies equally here, in regard to real analytic solutions $\textbf{U} : \overline{A(R_1 , R_2)} \rightarrow \mathbb{R}^n$ to \eqref{plaineasy1} and \eqref{plaineasy2}. As such, it is important to give explicit construction of a real analytic solution to \eqref{plaineasy1} and \eqref{plaineasy2} in a particular way.
Theorem \ref{TheMainTheoremWehave} does provide such a specific construction. Thus Theorem \ref{TheMainTheoremWehave} is much stronger than that of Theorem \ref{LooseForm}, and Theorem \ref{LooseForm} is the loose form of the more precise Theorem \ref{TheMainTheoremWehave}.
\end{remark}

Before we can comment on the method which we used in the establishment of Theorem \ref{LooseForm} and Theorem \ref{TheMainTheoremWehave}, we have to mention two widely used and very well-known standard techniques for constructing solutions of the divergence equation $\dv_{\mathbb{R}^n} \textbf{W} = f$.

The first one is the Bogovski map approach as developed by Bogovski \cite{Bogovski1979} within the framework of $W_0^{1,p}(\Omega )$-Sobolev space, in which $\Omega$ is a bounded Lipschitz domain in $\mathbb{R}^n$ $(n \geq 2 )$, $1 < p < \infty$, and $f \in L^p (\Omega )$ satisfies $\int_{\Omega} f \Vol_{\mathbb{R}^n} = 0$. The Bogovski map assigns to each $f \in L^{p}(\Omega)$ with $\int_{\Omega} f \Vol_{\mathbb{R}^n} =0$ a vector field $\textbf{W} = Bog_{\Omega}(f) \in W_0^{1,p} (\Omega )$ solving the equation $\dv_{\mathbb{R}^n} \textbf{W} = f$ on $\Omega$ in the weak sense, and that $\textbf{W} = Bog_{\Omega}(f)$ satisfies the following apriori estimate, with $C(N, p, \Omega )$ be a scale-invariant absolute constant which depends only on $N$, $p$ and the shape of $\Omega$.
\begin{equation*}
\big \| \nabla \textbf{W} \big \|_{L^p (\Omega )} \leq C(N ,p, \Omega ) \|f\|_{L^p (\Omega )}.
\end{equation*}

The second one is the well-known Kapitanskii-Pileckas approach $($see Kapitanskii-Pileckas \cite{KP}, as well as \cite{Dacorogna2002}$)$. Kapitanskii-Pileckas approach works for the setting of $f \in C^{l, \alpha} ( \overline{\Omega} )$ with $l \in \mathbb{N}$, $0 < \alpha < 1$, and $\int_{\Omega} f \Vol_{\mathbb{R}^n} =0$, where $\Omega$ is a bound domain in $\mathbb{R}^n$ with $\partial \Omega$ be a $C^{l+2 , \alpha }$-hypersurface in $\mathbb{R}^n$. The standard Kapitanskii-Pileckas approach provides a vector field $\textbf{W} \in C^{l+1, \alpha} (\overline{\Omega })$ which solves $\dv_{\mathbb{R}^n} \textbf{W} = f$ on $\overline{\Omega}$ and satisfies $\textbf{W} \big |_{\partial \Omega } = 0$. Moreover, such a $\textbf{W}$ satisfies the following apriori estiamte with an absolute constant $C (N, l ,\alpha , \Omega ) > 0$.
\begin{equation*}
\|\textbf{W}\|_{C^{l+1} (\overline{\Omega})} \leq C (N, l ,\alpha , \Omega ) \|f\|_{C^{l, \alpha }(\overline{\Omega})} .
\end{equation*}

At this point, we want to point out that the approach which we develop here for the proof of Theorem \ref{LooseForm} and Theorem \ref{TheMainTheoremWehave} is {different from} the standard Bogovski map approach \cite{Bogovski1979} and the Kapitanskii-Pileckas approach \cite{KP} mentioned above. We instead take the view point of De Rham Cohomology with compact support, and in practice:

\begin{itemize}
\item The first major step which we take here, thus leading us to the successful establishment of Lemma \ref{keylemma} and Corollary \ref{corollaryfromCohomology}, is basically inspired and guided by Spivak's proof of the cohomological statement $H_c^n(\mathbb{R}^n ) = \mathbb{R}$ in Chapter 8 of his book: \emph{A comprehensive Introduction to Differential Geometry, Volume I} $($ see page364-page367, Theorem 9, Chapter 8 of \cite{Spivak} $)$.
\end{itemize}
Besides the above mentioned influence of Spivak's proof of Theorem 9 of Chapter 8 in \cite{Spivak} upon us for our development of Lemma \ref{keylemma} and Corollary \ref{corollaryfromCohomology} , our method is basically original and self-contained.
\begin{itemize}
\item As such, in {section \ref{Appendix}} , we give a careful comparison between Lemma \ref{keylemma} which we developed with the original Spivak's proof of Theorem 9 of Chapter 8 in \cite{Spivak}. {Remark \ref{rmk:lastRemark}} explains precisely the new contributions which we input into the statement and proof of Lemma \ref{keylemma}.
\end{itemize}

Due to the fact that the first main step $($ Lemma \ref{keylemma} and Corollary \ref{corollaryfromCohomology} $)$ which we develop in this paper has its origin in the theory of cohomology with compact support, we also need to mention that we are definitely not the first to be aware of the deep connection between De Rham Cohomology theory and solutions to the divergence equation.

Indeed, as early as 1992, S. Takahashi \cite{Takahashi} extend the original idea of Bogovski \cite{Bogovski1979} to setting of differential forms on a Euclidean region which is star shaped with respect to every point of an open ball. Along this line of research, next we see the very influential work of M. Mitrea \cite{Mitrea2004}, in which M. Mitrea studied, among other things, generalized integral operators of Bogovski-type and regularized Poincare-type in the setting of differential forms with coefficient functions lying in Besov or Triebel-
Lizorkin spaces $($see the introduction of recent work \cite{Costabel} by Costabel and McIntosh for the precise descriptions of generalized Bogovski-type and Poincare-type integral operators in the setting of differential forms, as well as that of Mitrea's work \cite{Mitrea2004}  $)$. The work \cite{Costabel} appears to be one of the most recent works which can reflect the most recent developments of this subject matter and the reader can consult \cite{Costabel} for further references.

Before we say a few words about the way in which our paper is structured and organized, we would like to say that:
\begin{itemize}
\item In spite of the vast literature devoted to the study of the divergence equation $\dv_{\mathbb{R}^n} v = f$ under various regularity assumptions imposed on $f$ and the boundary of the domain region, the highest possible regularity assumption which we could see in the existing literature seems to be that of $f \in C_c^{\infty}(\Omega)$ or $f \in C^{\infty} (\Omega)$.
\end{itemize}
So, to the best of our knowledge, the study of the divergence equation $\dv_{\mathbb{R}^n} v = f$ under the Dirichlet condition $v \big |_{\partial \Omega}=0$ in the real analytic setting is seldomly seen. Thus, our establishment of Theorem \ref{LooseForm} and Theorem \ref{TheMainTheoremWehave} can be viewed as a very modest attempt along this direction.

Finally but not the least, we need to say that literature devoted to the study of the divergence equation in the framework of Sobolev spaces and that of Holder spaces is vast, and we do not attempt to do a survey of all the important research works here. However, a very detailed and accurate survey of many important and well-known literatures and research works for this subject is available in section III.7, page 227, of Chapter III the book \cite{Galdi} by Galdi.

\newpage
\section{Preparations}
In this section, we present several analytical tools that will be employed in the proofs of Lemma \ref{keylemma} and Theorem \ref{TheMainTheoremWehave}. The proof of Lemma \ref{lem:S.2} is standard and we leave to the readers. 

We use Lemma \ref{lem:S.2} to provide a proper logical support for the establishment of Lemma \ref{lem:s.3}. While, we use Lemma \ref{lem:s.3} to provide logical support for the establishment of Lemma \ref{keylemma}.

On the other hand, the establishment of Lemma \ref{lem:s.4} is logically independent from that of Lemma \ref{lem:S.2}, Lemma \ref{lem:s.3} and Lemma \ref{keylemma}. Instead, we use Lemma \ref{lemma5.1} to support the establishment of Lemma \ref{lem:s.4}.

Finally, we use Lemma \ref{lem:s.4} and Lemma \ref{keylemma} to obtain Theorem \ref{mainthm}, as well as Theorem \ref{LooseForm}, which are the main results of this paper.

\begin{lem}\label{lem:S.2}
    Take $n \in \mathbb{Z}^{+}$ which satisfies $n\geq 2$. Consider three real numbers $-\infty < L_{1} < L_{2} < L_{3} < +\infty$. Let $U$ be an open set in $\real^{n-1}$. Let $h:U \times (L_{1},L_{3}) \to \real^{1}$ be real analytic on $U \times (L_{1},L_{3})$ for which we have \begin{equation}\label{eq:s.2.1}
        h|_{U \times \{L_{2}\}} \equiv 0. 
    \end{equation}
    Then it follows that there exists a unique real-analytic function $h_{1}:U \times (L_{1},L_{3}) \to \real^{1}$ for which the following relation holds:
    \begin{equation}\label{eq:s.2.2}
        h(y) = (y_n - L_{2})\cdot h_{1}(y) \qquad \text{for all $y \in U \times (L_{1},L_{3})$}
    \end{equation}
\end{lem}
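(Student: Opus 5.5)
We propose to define $h_1$ by the integral formula coming from the fundamental theorem of calculus in the last variable. For $y = (y', y_n) \in U \times (L_1, L_3)$ we set
\begin{equation*}
h_1(y) = \int_0^1 (\partial_n h)\big(y', L_2 + t(y_n - L_2)\big)\, \dd t .
\end{equation*}
Since $(L_1, L_3)$ is an interval containing $L_2$, the segment from $L_2$ to $y_n$ stays inside $(L_1, L_3)$, so the integrand is defined. By \eqref{eq:s.2.1} and the chain rule,
\begin{equation*}
h(y) = h(y', y_n) - h(y', L_2) = \int_0^1 \frac{\dd}{\dd t} h\big(y', L_2 + t(y_n - L_2)\big)\, \dd t = (y_n - L_2)\, h_1(y),
\end{equation*}
which is \eqref{eq:s.2.2}.

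Uniqueness is immediate. If $h_1$ and $\tilde h_1$ both satisfy \eqref{eq:s.2.2}, they agree on the dense open set $\{y_n \neq L_2\}$. Both are continuous, so they agree everywhere. Equivalently, on $\{y_n \neq L_2\}$ we must have $h_1 = h/(y_n - L_2)$.

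The main point is the real analyticity of $h_1$ near the hyperplane $\{y_n = L_2\}$. Away from it, $h_1 = h/(y_n - L_2)$ is a quotient of real analytic functions with nonvanishing denominator, hence real analytic. Near a point $p = (p', L_2)$ we argue with power series. Choose a polydisc neighborhood of $p$ on which $h$ equals its convergent expansion
\begin{equation*}
h(y) = \sum_{\alpha, k} c_{\alpha k}\, (y' - p')^\alpha (y_n - L_2)^k .
\end{equation*}
Condition \eqref{eq:s.2.1} says that $\sum_\alpha c_{\alpha 0} (y'-p')^\alpha \equiv 0$ on an open set. Hence $c_{\alpha 0} = 0$ for all $\alpha$, by uniqueness of power series coefficients. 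Therefore
\begin{equation*}
\sum_{\alpha,\, k \geq 1} c_{\alpha k}\, (y'-p')^\alpha (y_n - L_2)^{k-1}
\end{equation*}
is a power series converging absolutely on the same polydisc, since dividing by $(y_n - L_2)$ only lowers one exponent and absolute convergence on a slightly smaller closed polydisc is preserved. Its sum is real analytic there. It coincides with $h/(y_n - L_2)$ off the hyperplane, so by continuity it coincides with $h_1$.

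The only step requiring care is this local convergence check. As an alternative, one could note that $h$ extends holomorphically to a complex neighborhood, where the integral formula defining $h_1$ is holomorphic by differentiation under the integral sign. Real analyticity of $h_1$ then follows on restricting back to real points.
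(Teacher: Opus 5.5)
Your proof is correct. The paper gives no proof of this lemma; it only says the proof is standard and leaves it to the reader, so there is no competing argument to compare against. What you supply is the standard argument that remark points to: Hadamard's integral formula for existence, density of $\{y_n \neq L_2\}$ together with continuity for uniqueness, and division of the local power series by $y_n - L_2$ after the $k=0$ coefficients are shown to vanish, for analyticity.

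One step deserves an explicit line: absolute convergence of the divided series at points of the hyperplane itself. There you cannot simply divide the convergent sum by $y_n - L_2$. Suppose the expansion converges absolutely on the box $\prod_i (p_i - r_i, p_i + r_i)$ and take $y$ in the box with $y_n = L_2$. Compare with the point $(y', L_2 + s)$, $0 < s < r_n$, to get $\sum_\alpha |c_{\alpha 1}|\,|y'-p'|^\alpha \le s^{-1}\sum_{\alpha,k}|c_{\alpha k}|\,|y'-p'|^\alpha s^k < \infty$.

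The integral formula is not strictly needed. Once the local division argument is in place, you can take $h_1 = h/(y_n - L_2)$ off the hyperplane and the divided series near it. These agree on overlaps by continuity, so they patch to the required $h_1$ directly.
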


\begin{lem}\label{lem:s.3}
    Consider $n \in \mathbb{Z}^{+}$ which satisfies $n \geq 2$ and $l = 0$ or $1$. Let $U$ be an open set in $\mathbb{R}^{n}$. Take $ 0 < R_{1} < R_{2} < \infty$. For any scalar-valued function $G \in C^{l}(U \times (0, +\infty))$ satisfying the following two properties:
    \begin{description}
        \item[Property 1] $G|_{U \times [R_{1},R_{2}]}$ is real-analytic on $U \times [R_{1},R_{2}]$,
        \item[Property 2] $\partial_{n}G = {\partial G \over \partial y_{n}} \in C^{l}(U \times(0,+\infty))$,
    \end{description}
    The following two statements are valid:
    \begin{description}
        \item[Statement 1] If $G$ further satisfies $G \equiv 0$ on $U \times (0,R_{1}]$, then it follows that $G \in C^{l+1}(U \times (0,R_{2}))$.
        \item[Statement 2] If $G$ further satisfies $G \equiv 0$ on $U \times [R_{2},+\infty)$, then it follows that $G \in C^{l+1}(U\times (R_{1},+ \infty))$.
    \end{description}
\end{lem}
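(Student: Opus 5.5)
The plan is a local argument near the single critical hyperplane, with Lemma \ref{lem:S.2} doing the main work. I read $U$ as an open subset of $\mathbb{R}^{n-1}$, consistent with Lemma \ref{lem:S.2}, and write $y=(y',y_n)$. I treat Statement 1 in detail; Statement 2 is identical after reflecting $y_n \mapsto -y_n$ (equivalently, replacing $R_1$ by $R_2$ and working from the other side).

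Away from $\{y_n = R_1\}$ there is nothing to prove. On $U\times(0,R_1)$ we have $G\equiv 0$, and on $U\times(R_1,R_2)$ the function $G$ is real analytic by Property 1. So fix a point $(y_0',R_1)$. By Property 1, understood in the sense of the Remark, there are a neighbourhood $V\times(R_1-\epsilon,R_1+\epsilon)$ of this point and a real analytic $\widetilde G$ on it with $\widetilde G = G$ for $y_n\ge R_1$.

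The first key step is that $\widetilde G$ vanishes on $V\times\{R_1\}$. This holds because $G$ is continuous and is $0$ for $y_n \le R_1$. Lemma \ref{lem:S.2} then gives $\widetilde G=(y_n-R_1)h_1$ with $h_1$ real analytic.

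\emph{Case $l=0$.} We need each first partial derivative of $G$ to exist and be continuous near the hyperplane.
\begin{itemize}
\item $\partial_n G$ is continuous by Property 2.
\item For $j<n$: at points of the hyperplane, $\partial_j G=0$, since $G$ vanishes identically along the hyperplane. Below the hyperplane, $\partial_j G = 0$. Above it, $\partial_j G = (y_n-R_1)\partial_j h_1$, which tends to $0$ as $y_n\downarrow R_1$.
\end{itemize}
Hence $\partial_j G$ is continuous, and $G\in C^1(U\times(0,R_2))$.

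\emph{Case $l=1$.} Property 2 now says $\partial_nG\in C^1$. In particular $\partial_nG$ is continuous and vanishes for $y_n<R_1$, so $\partial_n\widetilde G=0$ on the hyperplane. Since $\partial_n\widetilde G=h_1$ there, $h_1$ vanishes on $V\times\{R_1\}$. A second application of Lemma \ref{lem:S.2} gives $\widetilde G=(y_n-R_1)^2h_2$ with $h_2$ real analytic.

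Consequently every first and second partial derivative of $\widetilde G$ carries a factor $(y_n-R_1)$ and vanishes on the hyperplane. Checking derivatives of $G$:
\begin{itemize}
\item The first derivatives $\partial_k G$ exist, are $0$ on and below the hyperplane, and equal $O(y_n-R_1)$ above it. Their difference quotients in $y_n$ at the hyperplane tend to $0$ from both sides, and tangential difference quotients there are $0$.
\item So all second derivatives exist, are $0$ on $\{y_n\le R_1\}$, and coincide with the analytic, vanishing-at-the-boundary second derivatives of $\widetilde G$ above.
\end{itemize}
Hence $G\in C^2(U\times(0,R_2))$.

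The main obstacle is the $l=1$ case. The first-order vanishing from continuity of $G$ alone is not enough: it would leave $\partial_n\partial_jG=\partial_jh_1$ possibly discontinuous across $\{y_n=R_1\}$. The fix is exactly to use Property 2 to upgrade to second-order vanishing $(y_n-R_1)^2h_2$ via a second use of Lemma \ref{lem:S.2}. A minor technical point is making sure the analytic extension exists on a full neighbourhood of each point of $U\times\{R_1\}$, which is what "real analytic on $U\times[R_1,R_2]$" is taken to mean, as in the Remark.
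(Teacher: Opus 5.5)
Your $l=0$ argument is correct. The $l=1$ case, however, has a genuine gap. From $\widetilde G=(y_n-R_1)^2h_2$ it does \emph{not} follow that every second partial derivative of $\widetilde G$ vanishes on $\{y_n=R_1\}$. The tangential and mixed ones do, but
\[
\partial_n^2\widetilde G = 2h_2 + 4(y_n-R_1)\,\partial_n h_2 + (y_n-R_1)^2\,\partial_n^2 h_2 ,
\]
which equals $2h_2$ on the hyperplane. For the same reason, your claim that the $y_n$-difference quotients of $\partial_n G$ at the hyperplane tend to $0$ from both sides is false: from above they tend to $2h_2(y',R_1)$.

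Take $G=(y_n-R_1)^2$ for $y_n\ge R_1$ and $G=0$ for $y_n\le R_1$. This $G$ is $C^1$, vanishes below the hyperplane and is analytic on $U\times[R_1,R_2]$, yet it is not $C^2$. Your $l=1$ argument uses nothing beyond these three facts. The continuity of $\partial_nG$ that you invoke to get $h_1=0$ on the hyperplane already follows from $G\in C^1$. So your second application of Lemma \ref{lem:S.2} never uses the real content of Property 2, and the argument cannot be correct as written. Accordingly, your ``main obstacle'' remark misplaces the difficulty: it is $\partial_n^2G$, not $\partial_n\partial_jG$.

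The missing step is to use Property 2 at second order, which is what the paper does. Since $\partial_nG\in C^1$, the derivative $\partial_n^2G$ exists and is continuous. It vanishes for $y_n<R_1$ and agrees with $\partial_n^2\widetilde G$ for $y_n>R_1$. Hence $\partial_n^2\widetilde G=2h_2=0$ on $V\times\{R_1\}$, and a third application of Lemma \ref{lem:S.2} gives $\widetilde G=(y_n-R_1)^3h_3$. Now every second derivative of $\widetilde G$ genuinely carries a factor $(y_n-R_1)$, and your gluing argument goes through.

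Alternatively, you could mirror your own $l=0$ argument. Keep the quadratic factor to handle $\partial_k\partial_jG$ and $\partial_n\partial_jG$ for $j,k<n$. Then take the existence and continuity of $\partial_n^2G$ and $\partial_j\partial_nG$ directly from Property 2.
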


\begin{proof}
    It suffices to just prove {Statement 1} since the proof of {Statement 2} is basically identical. Furthermore, we present the proof only for the case $l = 1$. An analogous argument applies to the case $l = 0$.
    Let $U$ be an open set in $\real^{n-1}$. A typical point in $U$ will be denoted by $\bar{y} = (y_1, \dots, y_{n-1}) \in U$, so that a typical point in $U \times (0, +\infty)$ will be denoted by $y = (\bar{y},y_{n}) = (y_{1}, \dots, y_{n-1},y_{n}) \in U \times (0, +\infty)$.

Now, suppose that 
    \begin{equation}\label{eq:s.1.14}
        G \in C^{1}(U \times (0, +\infty))
    \end{equation}
    further satisfies the following hypothesis: for some $0 < R_{1} < R_{2} < + \infty$, we have
    \begin{description}
        \item[hypothesis 1] \label{eq:s.1.15} $G|_{U \times[R_{1},R_{2}]}$ is real analytic on $U \times [R_{1},R_{2}]$.
        \item[hypothesis 2] \label{eq:s.1.16} $\partial_{n}G = {\partial G \over \partial y_{n}} \in C^{1}(U \times(0,+\infty))$
        \item[hypothesis 3] \label{eq:s.1.17} $G \equiv 0$ on $U \times (0,R_{1}]$
    \end{description}
    Based on \eqref{eq:s.1.14}, {hypothesis 1, 2, 3}, we have to prove that \begin{equation}\label{eq:s.1.18}G \in C^{2}(U \times (0, R_{2})).\end{equation}

    Now {hypothesis 1} implies that there exists some $\delta_0 \in (0,1)$ for which we can find some real-analytic function $\tilde{G}:U \times (R_{1}(1- \delta_{0}),R_{2}(1+\delta_0)) \to \real^1$ such that \begin{equation} \label{eq:s.1.19}
        \tilde{G}|_{U \times [R_{1},R_{2}]} = G|_{U \times [R_{1},R_{2}]}.
    \end{equation}
    Now (\ref{eq:s.1.14}) implies 
    \begin{equation}\label{eq:s.1.20}
        \partial_{\alpha}G = {\partial G \over \partial y_{\alpha}} \in C^{0}(U \times (0, +\infty)) \qquad \text{for $1\leq \alpha \leq n$.}
    \end{equation}
    and {hypothesis 3} implies 
    \begin{equation}\label{eq:s.1.21}
        \partial_{\alpha}G|_{U \times (0,R_{1})} = 0 \qquad \text{for all $1 \leq \alpha \leq n$.}
    \end{equation}
Now (\ref{eq:s.1.20}) allows us to use the continuity of $\partial_{\alpha}G$ over $U \times (0, +\infty)$ to extend the result of (\ref{eq:s.1.21}) up to the "boundary" $U \times \{R_{1}\}$ so that we get 
\begin{equation}\label{eq:s.1.22}
    \partial_{\alpha}G|_{U \times (0,R_1]} \equiv 0 \qquad \text{for $1 \leq \alpha \leq n$}
\end{equation}
On the other hand, (\ref{eq:s.1.19}) implies
\begin{equation}\label{eq:s.1.23}
    \partial_{\alpha} \tilde{G}|_{U \times (R_{1},R_{2})} \equiv \partial_{\alpha} G |_{U \times (R_1,R_2)} \qquad \text{for $1 \leq \alpha \leq n$.}
\end{equation}
For each $1 \leq \alpha \leq n$, the real analyticity of $\partial_{\alpha}\tilde{G}$ on $U \times (R_1(1 - \delta_0), R_2(1 + \delta_0))$, together with the continuity of $\partial_\alpha G$ on $U \times (0, + \infty)$ allows us to extend the validity of (\ref{eq:s.1.23}) up to the "boundary" $U \times \{R_1\} \bigsqcup U \times \{R_2\}$ so that we have 
\begin{equation}\label{eq:s.1.24}
    \partial_\alpha \tilde{G}|_{U \times [R_1,R_2]} \equiv \partial_\alpha G|_{U \times [R_1,R_2]} \qquad \text{for $1\leq \alpha \leq n$.}
\end{equation}
So (\ref{eq:s.1.24}) and (\ref{eq:s.1.22}) together will give
\begin{equation}\label{eq:s.1.25}
    \partial_\alpha \tilde{G}|_{U \times \{R_1\}} \overset{(\ref{eq:s.1.24})}{=}\partial_\alpha G|_{U \times \{R_1\}} \overset{(\ref{eq:s.1.22})}{=} 0 \qquad \text{for $1 \leq \alpha \leq n$.}
\end{equation}

Next, {hypothesis 2} implies
\begin{equation}\label{eq:s.1.26}
    \partial_k\partial_n G \in C^0(U \times (0,+\infty)) \qquad \text{for $1\leq k \leq n$.}
\end{equation}

(\ref{eq:s.1.19}) implies 
\begin{equation}\label{eq:s.1.27}
    \partial_k\partial_n \tilde{G}|_{U \times (R_1,R_2)} = \partial_k\partial_n G|_{U \times (R_1,R_2)} \qquad \text{for $1 \leq k \leq n$.}
\end{equation}

Now, for each $1 \leq k \leq n$, the real analyticity of $\partial_k\partial_n \tilde{G}$ on $U \times (R_1(1-\delta_0),R_2(1+\delta_0))$ and the continuity of $\partial_k\partial_n G$ on $U \times (0, +\infty)$, as given by (\ref{eq:s.1.26}), together allows us to extend the validity of (\ref{eq:s.1.27}) to the boundary $U \times \{R_1\} \bigsqcup U \times \{R_2\}$ so that we get
\begin{equation}\label{eq:s.1.28}
    \partial_k\partial_n \tilde{G}|_{U \times [R_1,R_2]} = \partial_k\partial_n G|_{U \times [R_1,R_2]} \qquad \text{for $1 \leq k \leq n$.}
\end{equation}

Next, {hypothesis 3} implies
\begin{equation}\label{eq:s.1.29}
    \partial_k\partial_n G|_{U \times (0,R_1)} \equiv 0 \qquad \text{for $1 \leq k \leq n$.}
\end{equation}

For each $1 \leq k \leq n$, the continuity of $\partial_k \partial_n G$ over $U \times (0,+\infty)$, as ensured by (\ref{eq:s.1.26}), allows us to extend the validity of (\ref{eq:s.1.29}) to the boundary $U \times \{R_1\}$ so that we get
\begin{equation}\label{eq:s.1.30}
    \partial_k \partial_n G|_{U \times (0,R_1]} \equiv 0 \qquad \text{for $1 \leq k \leq n$.}
\end{equation}

Now (\ref{eq:s.1.28}), together with (\ref{eq:s.1.30}) imply:
\begin{equation}\label{eq:s.1.31}
    \partial_k \partial_n \tilde{G}|_{U \times \{R_1\}} \overset{(\ref{eq:s.1.28})}{=} \partial_k\partial_n G|_{U \times \{R_1\}} \overset{(\ref{eq:s.1.30})}{=} 0 \qquad \text{for all $1 \leq k \leq n$.}
\end{equation}

Now {hypothesis 3} and (\ref{eq:s.1.19}) clearly give 
\begin{equation}\label{eq:s.1.32}
    \tilde{G}|_{U \times \{R_{1}\}} \overset{(\ref{eq:s.1.19})}{\equiv} G|_{U \times \{R_1\}} \overset{\text{hypothesis 3}}{\equiv} 0
\end{equation}
So, by invoking {Lemma \ref{lem:S.2}}, we derive from (\ref{eq:s.1.32}) to get 
\begin{equation}\label{eq:s.1.33}
    \tilde{G}(y) = (y_n - R_1)\cdot \tilde{G}_{1}(y) \qquad \text{for all $y \in U \times (R_1(1-\delta_0),R_{2}(1+\delta_0))$.}
\end{equation}
where
\begin{equation}\label{eq:s.1.34}
    \text{$\tilde{G}_{1}:U \times (R_1(1-\delta_0),R_{2}(1+\delta_0))\to \real^{1}$ is real analytic on $U \times (R_1(1-\delta_0),R_{2}(1+\delta_0))$.}
\end{equation}
(\ref{eq:s.1.23}) gives 
\begin{equation}\label{eq:s.1.35}
    \partial_n \tilde{G}(y) = \tilde{G}_{1}(y) + (y_n -R_1)\cdot\partial_n \tilde{G}_1 (y) \qquad\text{for all $y \in U \times (R_1(1-\delta_0),R_{2}(1+\delta_0))$}
\end{equation}
Now, (\ref{eq:s.1.35}) and (\ref{eq:s.1.25}) together gives 
\begin{equation}\label{eq:s.1.36}
    0 \overset{(\ref{eq:s.1.29})}{\equiv} \partial_n \tilde{G}|_{U \times \{R_1\}} \overset{(\ref{eq:s.1.35})}{=} \tilde{G}_1|_{U \times \{R_1\}}
\end{equation}
Again, by invoking {Lemma \ref{lem:S.2}}, we derive from (\ref{eq:s.1.36}) and (\ref{eq:s.1.34}) to deduce that
\begin{equation}\label{eq:s.1.37}
    \tilde{G}_1(y) = (y_n - R_1)\cdot \tilde{G}_2(y) \qquad\text{for $y \in U \times (R_1(1-\delta_0),R_{2}(1+\delta_0))$,}
\end{equation}
where
\begin{equation}\label{eq:s.1.38}
    \text{$\tilde{G}_2:U \times (R_1(1-\delta_0),R_{2}(1+\delta_0)) \to \real^1$ is real analytic on $U \times (R_1(1-\delta_0),R_{2}(1+\delta_0))$.}
\end{equation}
We substitute (\ref{eq:s.1.37}) back to (\ref{eq:s.1.35}) to get
\begin{equation}\label{eq:s.1.39}
    \partial_n \tilde{G}(y) = 2(y_n - R_1)\cdot \tilde{G}_2(y) + (y_n - R_1)^2 \partial_n\tilde{G}_{2}(y) \qquad \text{for all $y \in U \times (R_1(1-\delta_0),R_{2}(1+\delta_0))$.}
\end{equation}
(\ref{eq:s.1.37}) implies
\begin{equation}\label{eq:s.1.40}
    \partial_n^2\tilde{G}(y) = 2\cdot 1\cdot \tilde{G}_2 (y) + 4(y_n - R_1) \partial_n\tilde{G}_2 (y) + (y_n - R_1)^2 \partial_n^2 \tilde{G}_2(y) \qquad \text{for all $y \in U \times (R_1(1-\delta_0),R_{2}(1+\delta_0))$.}
\end{equation}

Now, (\ref{eq:s.1.31}) and (\ref{eq:s.1.40}) together gives
\begin{equation}\label{eq:s.1.41}
    0 \overset{(\ref{eq:s.1.31})}{=} \partial_n^2 \tilde{G}|_{U \times \{R_1\}} \overset{(\ref{eq:s.1.40})}{=} \tilde{G}_2|_{U \times \{R_1\}}
\end{equation}
By invoking {Lemma \ref{lem:S.2}} again, we derive from (\ref{eq:s.1.38}) and (\ref{eq:s.1.41}) to deduce that
\begin{equation}\label{eq:s.1.42}
    \tilde{G}_2(y) = (y_n- R_1)\cdot \tilde{G}_3(y) \qquad \text{for all $y\in U \times (R_1(1-\delta_0),R_2(1+\delta_0))$,}
\end{equation}
where
\begin{equation}\label{eq:s.1.43}
    \text{$\tilde{G}_3:U \times (R_1(1-\delta_0),R_2(1+\delta_0)) \to \real^1$ is real analytic on $U \times (R_1(1-\delta_0),R_2(1+\delta_0))$.}
\end{equation}
We now simply substitute (\ref{eq:s.1.37}) and (\ref{eq:s.1.42}) successively back to (\ref{eq:s.1.33}) to get 
\begin{equation}\label{eq:s.1.44}
    \begin{aligned}
        \tilde{G}(y) = (y_n -R_1)\cdot \tilde{G}_1(y) &= (y_n - R_1)^2 \tilde{G}_2(y) \\&=(y_n - R_1)^3\tilde{G}_3(y) \qquad \text{for all $y \in U \times (R_1(1-\delta_0),R_{2}(1+\delta_0))$}
    \end{aligned}
\end{equation}

Now, by invoking {hypothesis 3}, (\ref{eq:s.1.19}), (\ref{eq:s.1.44}) and (\ref{eq:s.1.43}), we deduce with basic calculus that $G \in C^{2}(U \times (0,R_2))$ which is precisely (\ref{eq:s.1.18}) as claimed.
\end{proof}

\begin{lem}\label{lemma5.1}
    Let $n \in \mathbb{Z}^{+}$ with $n \geq 2$. Consider $\Psi_{n}:\mathbb{R}^{1} \to \mathbb{R}^{1}$ be the $(n+4)$-degree polynomial given by
    \begin{equation}\label{eq:5.1.1}
        \Psi_{n}(t) = {1\over (n+3)(n+4)}\left(t^{n+4} - 1\right) + {1\over n(n+1)}t^{2}\left(t^{n} - 1\right) - {2\over (n+1)(n+3)}t(t^{n+2} - 1)
    \end{equation}
    Then it follows that
    \begin{equation}\label{eq:5.1.2}
        \Psi_{n}(t) = \sum_{k= 5}^{n+4} {\Psi^{(k)}_{n}(1)\over k!}\cdot(t- 1)^{k} \qquad \text{for all $t \in \mathbb{R}^{1}$,}
    \end{equation}
    and that
    \begin{equation}\label{eq:5.1.3}
        \Psi^{(k)}_{n}(1) > 0 \qquad \text{for all $k \geq 5$.}
    \end{equation}
\end{lem}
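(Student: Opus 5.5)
The plan is to reduce everything to the third derivative of $\Psi_n$, which factors completely. First expand $\Psi_n$ as a sum of monomials. Put $A = \frac{1}{(n+3)(n+4)}$, $B=\frac{2}{(n+1)(n+3)}$ and $C=\frac{1}{n(n+1)}$. Then
\begin{equation*}
\Psi_n(t) = A t^{n+4} - B t^{n+3} + C t^{n+2} - C t^{2} + B t - A .
\end{equation*}
Clearly $\Psi_n(1)=0$. Differentiating term by term gives
\begin{equation*}
\Psi_n'(t) = \tfrac{1}{n+3}t^{n+3} - \tfrac{2}{n+1}t^{n+2} + \tfrac{n+2}{n(n+1)}t^{n+1} - \tfrac{2}{n(n+1)}t + \tfrac{2}{(n+1)(n+3)} .
\end{equation*}
Substituting $t=1$ and combining the two terms over $(n+1)(n+3)$ and the two over $n(n+1)$ should give $\Psi_n'(1)=0$.

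Differentiate once more:
\begin{equation*}
\Psi_n''(t) = t^{n+2} - \tfrac{2(n+2)}{n+1}t^{n+1} + \tfrac{n+2}{n}t^{n} - \tfrac{2}{n(n+1)} .
\end{equation*}
Over the common denominator $n(n+1)$, the numerator of $\Psi_n''(1)$ is $n(n+1)-2n(n+2)+(n+1)(n+2)-2$, which is $0$. The key observation is that the constant term disappears in the next derivative, and the remaining three terms share the factor $n+2$:
\begin{equation*}
\Psi_n'''(t) = (n+2)\big(t^{n+1} - 2t^{n} + t^{n-1}\big) = (n+2)\, t^{n-1}(t-1)^2 .
\end{equation*}

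Next, expand $t^{n-1}=\big(1+(t-1)\big)^{n-1}$ by the binomial theorem. This gives
\begin{equation*}
\Psi_n'''(t) = (n+2)\sum_{j=0}^{n-1}\binom{n-1}{j}(t-1)^{j+2} .
\end{equation*}
Every coefficient here is strictly positive, and there are no $(t-1)^0$ or $(t-1)^1$ terms. Since $\Psi_n$ is a polynomial of degree $n+4$, its Taylor expansion at $1$ is exact. Together with $\Psi_n(1)=\Psi_n'(1)=\Psi_n''(1)=0$, integrating the displayed identity three times from $1$ yields \eqref{eq:5.1.2}. Matching coefficients also gives $\Psi_n^{(3)}(1)=\Psi_n^{(4)}(1)=0$ and, for $5\le k\le n+4$,
\begin{equation*}
\Psi_n^{(k)}(1) = (n+2)\,(k-3)!\binom{n-1}{k-5} > 0 ,
\end{equation*}
which is \eqref{eq:5.1.3}.

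One point must be made explicit: $\Psi_n^{(k)}\equiv 0$ for $k>n+4$, so \eqref{eq:5.1.3} can only hold, and will only be claimed, in the range $5\le k\le n+4$, which is the range appearing in \eqref{eq:5.1.2}. The only real obstacle is the bookkeeping in checking $\Psi_n'(1)=\Psi_n''(1)=0$. Once those two vanishings are confirmed, the factorization of $\Psi_n'''$ makes the positivity immediate.
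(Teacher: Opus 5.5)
Your proof is correct, and it takes a genuinely different and cleaner route than the paper. The computations you flagged as the main obstacle do go through: $\frac{1}{n+3}+\frac{2}{(n+1)(n+3)}=\frac{1}{n+1}$ and $\frac{(n+2)-2}{n(n+1)}=\frac{1}{n+1}$ give $\Psi_n'(1)=0$, and the factorization $\Psi_n'''(t)=(n+2)t^{n-1}(t-1)^2$ is right.

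The paper proceeds differently. It asserts $\Psi_n^{(j)}(1)=0$ for $j\le 4$, $\Psi_n^{(5)}(1)=2(n+2)$ and $\Psi_n^{(6)}(1)=6(n+2)(n-1)$ by direct calculation. It then treats $7\le k\le n+4$ separately: it divides out $(t-1)$ using geometric sums, substitutes $\mu=t-1$, and splits the quotient into $\mathscr{P}_1+\mathscr{P}_2+\mathscr{P}_3$. Here $\mathscr{P}_1$ has degree $2$ and $\mathscr{P}_2$ has positive coefficients. The coefficients of $\mathscr{P}_3$ are guaranteed positive only from degree $7$ on, which is why the low orders must be done by hand.

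Your closed form $\Psi_n^{(k)}(1)=(n+2)(k-3)!\binom{n-1}{k-5}$ recovers the paper's values at $k=5,6$ and treats all $k$ uniformly. It also avoids the coefficient bookkeeping of \eqref{eq:5.1.11}, which is misprinted in the paper. The factor $\binom{n+2}{\alpha}$ is missing from the displayed coefficients; this is harmless for the sign argument, but it shows how error-prone that route is.

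Your formula also yields quantitative information the paper's sign argument does not give. For example, $\Psi_n(t)\ge \frac{n+2}{60}(t-1)^5$ for $t\ge 1$, which would be useful for quantitative control of the linear system in Lemma \ref{lem:s.4}.

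Your reading of \eqref{eq:5.1.3} as the range $5\le k\le n+4$ is the correct one, since $\Psi_n^{(k)}\equiv 0$ for $k>n+4$. The paper's own proof also establishes positivity only in that range.
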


\begin{proof}
Starting from \eqref{eq:5.1.1}, a straight forward calculation immediately leads to the following information:
\begin{align}
    \Psi_{n}(1) &= \Psi'_{n}(1) = \Psi''_{n}(1) = \Psi^{(3)}_{n}(1) = \Psi^{(4)}_{n}(1) = 0. \label{eq:5.1.4} \\
    \Psi^{(5)}_{n}(1) &= 2(n+2). \label{eq:5.1.5} \\
    \Psi^{(6)}_{n}(1) &= 6(n+2)(n-1). \label{eq:5.1.6}
\end{align}

(\ref{eq:5.1.4}), (\ref{eq:5.1.5}) and (\ref{eq:5.1.6}) gives
\begin{equation}
    \Psi_{n}(t) = \sum_{k = 5}^{n+4} \frac{\Psi^{(k)}_{n}(1)}{k!}(t-1)^{k}, \qquad \text{for all $t \in \mathbb{R}^{1}$.}
\end{equation}
which is exactly (\ref{eq:5.1.2}) as claimed in {Lemma \ref{lemma5.1}}.

In order to prove (\ref{eq:5.1.3}), we have to view (\ref{eq:5.1.1}) in an alternative way: By using the formula $t^{n} - 1 = (t-1)(t^{n-1} + t^{n-2} + \cdots + t + 1)$ for $t \in \mathbb{R}^{1}$, a direct computation gives:

\begin{equation} \label{eq:5.1.7}
\begin{aligned}
    \Psi_{n}(t) =  (t-1)&\left\{ {1\over(n+3)(n+4)}\sum_{\alpha = 0}^{n+3}t^{\alpha} + {1\over n(n+1)}t^{2}\sum_{\beta}^{n-1}t^{\beta} - {2\over (n+1)(n+3)}t\sum_{\gamma = 0}^{n+1}t^{\gamma} \right\} \\ 
    =(t-1) &\left\{ {1\over (n+3)(n+4)}(1+t) - {2t\over (n+1)(n+3)} + {12\over n(n+1)(n+3)(n+4)}\sum_{\alpha = 2}^{n+1}t^{\alpha} + \right.\\ 
     &\left. + {1\over (n+3)(n+4)}(t^{n+2} + t^{n+3}) - {2\over (n+1)(n+3)}t^{n+2}\right\} \qquad \text{for $t \in \mathbb{R}^{1}$.}
\end{aligned}
\end{equation} 

Now we use the change of variable $\mu = t -1$. So (\ref{eq:5.1.2}) and (\ref{eq:5.1.7}) gives 
\begin{equation}
    \sum_{k = 5}^{n+4} {\Psi^{(4)}_{n}\over k!}\mu^{k} = \mathscr{P}_{1}(\mu) + \mathscr{P}_{2}(\mu) + \mathscr{P}_{3}(\mu), \qquad \text{for $\mu \in \mathbb{R}^{1}$.} \label{eq:5.1.8}
\end{equation}
where
\begin{align}
    \mathscr{P}_{1}(\mu) :&=  \mu \cdot\left\{{2+\mu\over (n+3)(n+4)} - {2(1+\mu)\over (n+1)(n+3)}\right\}\label{eq:5.1.9}\\
    \mathscr{P}_{2}(\mu) :&= {12\mu\over n(n+1)(n+3)(n+4)}\sum_{\alpha = 2}^{n+1} \sum_{a = 0}^{\alpha}\binom{\alpha}{a} \mu^{a}  \label{eq:5.1.10}\\
    \mathscr{P}_{3}(\mu) :&=  \mu\cdot \left\{{1\over (n+3)(n+4)}\left((1+\mu)^{n+2} + (1+\mu)^{n+3}\right) - {2\over (n+1)(n+3)}(1+\mu)^{n+2}\right\} \notag \\
     &= {\mu\over (n+3)(n+4)}\left\{\sum_{\alpha = 0}^{n+2}{\alpha(n+7) - 6(n+3)\over (n+1)(n+3 - \alpha)}\cdot\mu^{\alpha} + {\mu^{n+3}\over (n+4)}\right\}\label{eq:5.1.11}
\end{align}
Observes that 
\begin{equation}
    \text{for all "$6 \leq \alpha \leq n+2$", we have ${\alpha(n+7) - 6(n+3)\over (n+1)(n+3-\alpha)}\geq {24\over (n+1)(n+3 - \alpha)} > 0$} \label{eq:5.1.12}
\end{equation}
The fact that "all coefficients of $\mathscr{P}_{2}$ are positive", together with (\ref{eq:5.1.8}), $\deg\mathscr{P}_{1} = 2$ and (\ref{eq:5.1.12}) immediately imply that 
\begin{equation}
    \Psi^{(k)}_{n}(1)>0 \qquad \text{for all $7 \leq k \leq n+4$.} \label{eq:5.1.13}
\end{equation}

Now, by gathering (\ref{eq:5.1.5}), (\ref{eq:5.1.6}) and (\ref{eq:5.1.13}), we conclude that (\ref{eq:5.1.3}) holds. The proof of {Lemma\ref{lemma5.1}} is completed.
\end{proof}

\begin{lem}\label{lem:s.4}
    Let $n \in \integ^{+}$ with $n \geq 2$, $l = 0,1$ and $0< R_1 < R_2 < +\infty$. Let $A_{1k}:S^{n-1} \to \real^{1}$, $A^{2k}:S^{n-1} \to \real^1$ be prescribed real analytic functions on $S^{n-1}$, for $k = 0,1,2,\dots ,l$. Then it follows that there exists some uniquely determined real-analytic functions $\mathscr{C}_0,\mathscr{C}_{1}, \dots,\mathscr{C}_{2l+1}$ on $S^{n-1}$, each expressible as a finite linear combinations of $A_{1k}$ and $A_{2k}$ (for $0\leq k \leq l$) with constant coefficients such that the real analytic function $Q:\real^n - \{0\} \to \real^1$, which is defined by
    \begin{equation}\label{eq:s.4.1}
        Q(\rho w ) = \sum_{\alpha = 0}^{2l+1}\mathscr{C}_{\alpha}(w) \rho^{\alpha}, \qquad \text{for all $\rho > 0$, $w \in S^{n-1}$}
    \end{equation}
will satisfy the following conclusions:
\begin{align}
    &\left.{d^k\over d\rho^{k}}\right|_{\rho = R_1}Q(\rho w) = A_{1k}(w) \qquad \text{for all $0 \leq k \leq l$, and all $w \in S^{n-1}$}\label{eq:s.4.2}\\
    &\left.{d^k\over d\rho^k}\right|_{\rho = R_2}Q(\rho w) = A_{2k}(w) \qquad \text{for all $0 \leq k \leq l$, and all $w \in S^{n-1}$} \label{eq:s.4.3}\\
    &\int_{A(R_1,R_2)} Q(x) \left.\mathrm{Vol}_{\real^n}\right|_{x} = 0 \label{eq:s.4.4}
\end{align}
\end{lem}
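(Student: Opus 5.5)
The plan is to treat the statement as a Hermite interpolation problem in the radial variable $\rho$, carried out pointwise in $w \in S^{n-1}$. For fixed $w$, write $q_w(\rho) = \sum_{\alpha=0}^{2l+1}\mathscr{C}_\alpha(w)\rho^\alpha$, a polynomial of degree at most $2l+1$. Conditions \eqref{eq:s.4.2} and \eqref{eq:s.4.3} prescribe $q_w^{(k)}(R_1)$ and $q_w^{(k)}(R_2)$ for $0\le k\le l$. That is $2l+2$ linear conditions on the $2l+2$ unknowns $\mathscr{C}_0(w),\dots,\mathscr{C}_{2l+1}(w)$.

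\textbf{Step 1: existence and uniqueness at each $w$.} The coefficient matrix $M$ is the confluent Vandermonde matrix at the nodes $R_1,R_2$, each of multiplicity $l+1$. It depends only on $R_1,R_2,l$, not on $w$. I will show $M$ is invertible by the usual kernel argument. If a polynomial of degree at most $2l+1$ vanishes to order $l+1$ at both $R_1$ and $R_2$, then it is divisible by $(\rho-R_1)^{l+1}(\rho-R_2)^{l+1}$. That product has degree $2l+2$, so the polynomial must be zero, using $R_1\neq R_2$.

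\textbf{Step 2: regularity and the form of the coefficients.} Inverting gives
\[
(\mathscr{C}_0,\dots,\mathscr{C}_{2l+1})^T = M^{-1}(A_{10},\dots,A_{1l},A_{20},\dots,A_{2l})^T .
\]
So each $\mathscr{C}_\alpha$ is a constant-coefficient linear combination of the $A_{1k},A_{2k}$, and is therefore real analytic on $S^{n-1}$. Uniqueness follows from the invertibility of $M$. For $l=0,1$ I would write the inverse explicitly, for instance via the Hermite basis polynomials $H_{ik}(\rho)$ with $Q(\rho w)=\sum_{i,k}A_{ik}(w)H_{ik}(\rho)$. Real analyticity of $Q$ on $\mathbb{R}^n\setminus\{0\}$ then follows from $\rho=|x|$ and $w=x/|x|$ being analytic away from $0$.

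\textbf{Step 3: the integral condition \eqref{eq:s.4.4}.} This is the step I expect to be the main obstacle. In polar coordinates,
\[
\int_{A(R_1,R_2)}Q\,\Vol_{\mathbb{R}^n} = \sum_{i,k}\Big(\int_{R_1}^{R_2}H_{ik}(\rho)\rho^{n-1}\,d\rho\Big)\int_{S^{n-1}}A_{ik}\,dS .
\]
Since Steps 1--2 already fix $Q$ uniquely, \eqref{eq:s.4.4} is not an additional equation that can be solved for. Instead it becomes a single linear constraint on the spherical means $\int_{S^{n-1}}A_{ik}$. My plan is therefore as follows.
\begin{itemize}
\item Compute the moments $\int_{R_1}^{R_2}H_{ik}\rho^{n-1}\,d\rho$ explicitly. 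After rescaling $t=\rho/R_1$, these are polynomial expressions of the type handled in Lemma \ref{lemma5.1}, whose positivity information controls their signs.
\item Check that this constraint is exactly the compatibility that the data $A_{ik}$ satisfy in the intended application. There they arise as boundary traces of a function with zero total integral, coming from the zero-mean hypothesis on $f$.
\end{itemize}
If the constraint does not hold automatically for arbitrary data, the lemma as stated needs one more free parameter. The first thing I would try is to add a single radial term $c\,\rho^{2l+2}(\rho-R_1)^{l+1}(\rho-R_2)^{l+1}$-type correction, which preserves \eqref{eq:s.4.2} and \eqref{eq:s.4.3}. Its constant $c$ would be fixed by \eqref{eq:s.4.4}, and the non-vanishing of its radial moment would be verified via Lemma \ref{lemma5.1}.
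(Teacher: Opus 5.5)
Your Steps 1--2 are correct, and so is your diagnosis at the start of Step 3. With only $\mathscr{C}_0,\dots,\mathscr{C}_{2l+1}$, the Hermite conditions \eqref{eq:s.4.2}--\eqref{eq:s.4.3} already determine $Q$, and \eqref{eq:s.4.4} then fails in general: for $l=0$, taking $A_{10}=A_{20}\equiv 1$ forces $Q\equiv 1$. The paper's proof actually uses $2l+3$ unknowns $\mathscr{C}_0,\dots,\mathscr{C}_{2l+2}$ (three for $l=0$, five for $l=1$), so the upper index $2l+1$ in the statement is a misprint. It also imposes the integral condition \emph{pointwise in $w$}, as the extra row $\sum_{\alpha}\mathscr{C}_\alpha(w)\frac{R_2^{n+\alpha}-R_1^{n+\alpha}}{n+\alpha}=0$ of a square system, and shows the last pivot ($\widetilde\Psi_n(t)$, resp.\ $\Psi_n(t)$) is nonzero.

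The gap is that your Step 3 never reaches this. Your first option cannot work. In Theorem \ref{TheMainTheoremWehave} the data are $A_{a0}=f(R_a\,\cdot)$, which are arbitrary real-analytic traces, and the vanishing of $\int f$ over the annulus imposes no linear relation on them. Your fallback, a scalar $c$ times $\rho^{2l+2}(\rho-R_1)^{l+1}(\rho-R_2)^{l+1}$, does enforce the single scalar identity \eqref{eq:s.4.4}, but it breaks the lemma in several ways:
\begin{itemize}
\item $c$ becomes a combination of the spherical means $\int_{S^{n-1}}A_{ik}$, not of the functions $A_{ik}$ with constant coefficients.
\item $Q$ acquires terms up to degree $4l+4$.
\item Uniqueness is lost.
\item You do not get $\int_{R_1}^{R_2}Q(\rho w)\rho^{n-1}\,\dd\rho=0$ for each $w$. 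The paper's system produces exactly this, and it is used later to obtain $\lambda(R_2y)=0$ in \eqref{goodvanishing}.
\end{itemize}

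The missing idea is to add exactly one free coefficient and let it depend on $w$. Every polynomial of degree at most $2l+2$ satisfying \eqref{eq:s.4.2}--\eqref{eq:s.4.3} has the form $H_w(\rho)+c(w)(\rho-R_1)^{l+1}(\rho-R_2)^{l+1}$, where $H_w$ is your Hermite interpolant. The radial condition then reads $c(w)\,m_l=-\int_{R_1}^{R_2}H_w(\rho)\rho^{n-1}\,\dd\rho$, with $m_l=\int_{R_1}^{R_2}(\rho-R_1)^{l+1}(\rho-R_2)^{l+1}\rho^{n-1}\,\dd\rho$. The integrand has constant sign $(-1)^{l+1}$ on $(R_1,R_2)$, so $m_l\neq 0$. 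This gives existence, uniqueness, and constant-coefficient linear dependence on the $A_{ik}(w)$ at once, and integrating over $S^{n-1}$ yields \eqref{eq:s.4.4}.

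Completed this way, your route is cleaner than the paper's, because $m_l$ is precisely the paper's last pivot. Indeed $m_0=-\frac{R_1^{n+2}}{n(n+1)(n+2)}\widetilde\Psi_n(t)$ and $m_1=\frac{2R_1^{n+4}}{n+2}\Psi_n(t)$; for the latter, one checks $\Psi_n'''(t)=(n+2)t^{n-1}(t-1)^2$, with $\Psi_n,\Psi_n',\Psi_n''$ vanishing at $t=1$. So the row reduction and the coefficient analysis of Lemma \ref{lemma5.1} reduce to a one-line sign observation. Note also that Lemma \ref{lemma5.1} concerns this particular moment, not the moment of your $\rho^{2l+2}$-weighted term.
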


In the case of $l = 0$, the real analytic functions $\mathscr{C}_{0},\mathscr{C}_{1},\mathscr{C}_{2}$ on $S^{n-1}$ satisfying (\ref{eq:s.4.1}), (\ref{eq:s.4.2}), (\ref{eq:s.4.3}) and (\ref{eq:s.4.4}) are given as follows: \begin{align}
    \mathscr{C}_{2} =& {(n+2)\over R_1^2 \sum_{\alpha = 2}^{n+2}\binom{n+2}{\alpha}\mu^{\alpha}}\left\{\left(\sum_{\beta = 1}^{n}\binom{n+1}{\beta +1}\mu^{\beta}\right)A_{10} +\left(\sum_{\beta = 0}^{n}\binom{n+1}{\beta +1} \beta\mu^{\beta}\right)A_{20}\right\} \label{eq:s.4.5}
\end{align}
\begin{equation}
\begin{aligned}
    \mathscr{C}_{1} =&
        - {n+1\over R_1 \sum_{\alpha = 2}^{n+2}\binom{n+2}{\alpha}(\alpha - 2)\mu^{\alpha}}\left( (n+2)n\mu + 2 \sum_{\alpha = 3}^{n+2}\binom{n+2}{\alpha}\mu^{\alpha - 1} \right)A_{10}\\ &- {1\over R_1 \sum_{\alpha =2}^{n+2}\binom{n+2}{\alpha}(\alpha - 2)\mu^{\alpha}}\left(\sum_{\alpha = 1}^{n+2}\binom{n+2}{\alpha}(\alpha - 1)(2n+2-\alpha)\mu^{\alpha - 1}\right)A_{20} \label{eq:s.4.6}
\end{aligned}
\end{equation}
\begin{equation}
\begin{aligned}
    \mathscr{C}_{0} =& {1\over \sum_{\alpha = 2}^{n+2}\binom{n+2}{\alpha}(\alpha - 2)\mu^{\alpha}}\left( n \mu^{n+2}+ \sum_{\alpha = 2}^{n+1}\binom{n+2}{\alpha} {n^2 +2n\over \alpha +1}\mu^{\alpha} + {(n+2)(n+1)n\over 2} \mu \right)A_{10} \\ &+ {1\over \sum_{\alpha = 2}^{n+2}\binom{n+2}{\alpha}(\alpha - 2)\mu^{\alpha}}\left( \sum_{\beta = 0}^{n+1} n\beta \binom{n+2}{\beta +1}\mu^{\beta} \right)A_{20} \label{eq:s.4.7}
\end{aligned}
\end{equation}
where $\mu = {R_{2}\over R_{1}} - 1$.

\begin{proof}
    Here, we always use the abbreviation
\[
t := \frac{R_2}{R_1} > 1
\qquad\mbox{ and }\quad
\mu := t-1 = \frac{R_2}{R_1}-1 > 0.
\]

We just observe that for $l=0$, proving the existence of real-analytic functions
$\mathscr C_0, \mathscr C_1, \mathscr C_2$ on $S^{n-1}$ satisfying
\eqref{eq:s.4.2} to \eqref{eq:s.4.4} is the same as proving that the following linear-algebra problem
\begin{equation}\label{eq:s.5.1}
\begin{bmatrix}
1 & R_1 & R_1^2 \vspace{5pt}\\
1 & R_2 & R_2^2 \vspace{5pt}\\
\dfrac{R_2^n - R_1^n}{n} &
\dfrac{R_2^{n+1}-R_1^{n+1}}{n+1} &
\dfrac{R_2^{n+2}-R_1^{n+2}}{n+2}
\end{bmatrix}
\begin{bmatrix}
\mathscr C_0 \\
\mathscr C_1 \\
\mathscr C_2
\end{bmatrix}
=
\begin{bmatrix}
A_{10} \\
A_{20} \\
0
\end{bmatrix}
\end{equation}
has a unique solution $[\mathscr C_0, \mathscr C_1, \mathscr C_2]^\top$.

By elementary row-reduction, system \eqref{eq:s.5.1} is equivalent to

\begin{equation}\label{eq:s.5.2}
\begin{aligned}
&\begin{bmatrix}
1 & R_1 & R_1^2 \vspace{5pt}\\
0 & 1 & R_1+R_2 \vspace{5pt}\\
0 & 0 &
-\dfrac{R_1^{n+2}}{n(n+1)(n+2)} \,
\widetilde{\Psi}_n(t)
\end{bmatrix}
\begin{bmatrix}
\mathscr C_0 \\
\mathscr C_1 \\
\mathscr C_2
\end{bmatrix}\\
&\qquad\qquad=
\begin{bmatrix}
A_{10} \vspace{5pt}\\
\dfrac{1}{R_2-R_1}\cdot\Big(A_{20}-A_{10}\Big) \vspace{5pt}\\
-\dfrac{R_2^n-R_1^n}{n} A_{10}
-\dfrac{R_1^{n+1}}{n(n+1)}
\Big( nt^{n+1} - (n+1)t^n + 1 \Big)
\dfrac{A_{20}-A_{10}}{R_2-R_1}
\end{bmatrix},
\end{aligned}
\end{equation}

where

\begin{equation}\label{eq:s.5.3}
\widetilde{\Psi}_n(t)
=
n t^{n+2}
-(n+2)t^{n+1}
+(n+2)t
-n
=
\sum_{\alpha=2}^{n+2}
\binom{n+2}{\alpha}
(\alpha-2)
(t-1)^\alpha.
\end{equation}

System \eqref{eq:s.5.3} clearly implies

\begin{equation}\label{eq:s.5.4}
\widetilde{\Psi}_n(t) > 0
\qquad
\text{for all }\quad
t=\frac{R_2}{R_1}>1.
\end{equation}

Hence \eqref{eq:s.5.4} implies that \eqref{eq:s.5.2} is uniquely solvable with the following unique solution set with $t=\frac{R_2}{R_1}>1$.

\begin{equation}\label{eq:s.5.5}
\begin{cases}
\displaystyle
\mathscr C_2
=
\dfrac{n+2}{R_1^2\,\widetilde{\Psi}_n(t)\,(t-1)}\cdot
\Big\{
\left( t^{n+1}-(n+1)t+n \right) A_{10}
+
\left( nt^{n+1}-(n+1)t^n+1 \right) A_{20}
\Big\}, \vspace{5pt}
\\
\displaystyle
\mathscr C_1
=
-(R_1+R_2)\mathscr C_2
+
\dfrac{1}{R_2-R_1}\cdot (A_{20}-A_{10}), \vspace{5pt}
\\
\displaystyle
\mathscr C_0
=
-R_1 \mathscr C_1
-R_1^2 \mathscr C_2
+A_{10}.
\end{cases}
\end{equation}

Now, recall that $t=\frac{R_2}{R_1}>1$ and $t=1+\mu$ with $\mu=\frac{R_2}{R_1}-1$, by using the formula
$$
t^m=(1+\mu)^m=1+\sum_{\alpha=1}^{m}\binom{m}{\alpha}\mu^\alpha
$$
in a repeated and successive manner, a straight-forward computation based on \eqref{eq:s.5.5} will directly lead to conclusions \eqref{eq:s.4.5}, \eqref{eq:s.4.6}, and \eqref{eq:s.4.7}. Since such a long and tedious but routine computation is "brute-force" in nature, and thus un-enlightening, we omit the details and leave it to the reader.

Next, for the case of $l=1$, proving the existence of real-analytic functions
$\mathscr C_0, \mathscr{C}_1,\dots,\mathscr C_4$ on $S^{n-1}$ satisfying
\eqref{eq:s.4.2}, \eqref{eq:s.4.3}, \eqref{eq:s.4.4}
is equivalent to proving that the following linear system

\begin{equation}\label{eq:s.5.6}
M
\begin{bmatrix}
\mathscr C_0\\
\mathscr C_1\\
\mathscr C_2\\
\mathscr C_3\\
\mathscr C_4
\end{bmatrix}
=
\begin{bmatrix}
A_{10}\\
A_{20}\\
A_{11}\\
A_{21}\\
0
\end{bmatrix}
\end{equation}

admits a unique solution $[\mathscr C_0, \mathscr C_1, \dots, \mathscr C_4]^\top$, where

\[
M=
\begin{bmatrix}
1 & R_1 & R_1^2 & R_1^3 & R_1^4 \vspace{5pt}\\
1 & R_2 & R_2^2 & R_2^3 & R_2^4 \vspace{5pt}\\
0 & 1 & 2R_1 & 3R_1^2 & 4R_1^3 \vspace{5pt}\\
0 & 1 & 2R_2 & 3R_2^2 & 4R_2^3 \vspace{5pt}\\
\dfrac{R_2^n-R_1^n}{n} &
\dfrac{R_2^{n+1}-R_1^{n+1}}{n+1} &
\dfrac{R_2^{n+2}-R_1^{n+2}}{n+2} &
\dfrac{R_2^{n+3}-R_1^{n+3}}{n+3} &
\dfrac{R_2^{n+4}-R_1^{n+4}}{n+4}
\end{bmatrix}.
\]

By applying row-reduction technique several times, \eqref{eq:s.5.6} can be reduced to the following equivalent system of equations

\begin{equation}\label{eq:s.5.7}
\begin{bmatrix}
1 & R_1 & R_1^2 & R_1^3 & R_1^4 \vspace{5pt}\\
0 & 1 & 2R_1 & 3R_1^2 & 4R_1^3 \vspace{5pt}\\
0 & 0 & 1 & R_2+2R_1 & R_2^2+2R_2R_1+3R_1^2 \vspace{5pt}\\
0 & 0 & 0 & 1 & 2(R_2+R_1) \vspace{5pt}\\
0 & 0 & 0 & 0 &
\dfrac{2 R_1^{n+4}}{n+2}\cdot\Psi_n(t)
\end{bmatrix}
\begin{bmatrix}
\mathscr C_0\\
\mathscr C_1\\
\mathscr C_2\\
\mathscr C_3\\
\mathscr C_4
\end{bmatrix}
=
\begin{bmatrix}
A_{10}\vspace{5pt}\\
A_{11}\vspace{5pt}\\
\frac{1}{R_2-R_1}\Big(\frac{A_{20}-A{10}}{R_2-R_1}-A_{11}\Big)\vspace{5pt}\\
\frac{A_{21}-A{11}}{(R_2-R_1)^2}-\frac{2}{(R_2-R_1)^2}\Big(\frac{A_{20}-A{10}}{R_2-R_1}-A_{11}\Big)\vspace{5pt}\\
B
\end{bmatrix},
\end{equation}

where

\begin{align*}
\Psi_n(t)
&=
\frac{1}{(n+3)(n+4)}(t^{n+4}-1)
-\frac{2}{(n+1)(n+3)}(t^{n+3}-t)\\
&\quad +\frac{1}{n(n+1)} t^2 (t^n-1)\quad\mbox{ for all }\quad t=\frac{R_2}{R_1}>1
\label{eq:s.5.8}
\end{align*}
and
\begin{equation}
\begin{split}
B=&-\frac{(R_2^n-R_1^n)}{n}\cdot A_{10}-\frac{R_1^{n+1}}{n(n+1)}\cdot(n\cdot t^{n+1}-(n+1)\cdot t^n+1)\cdot A_{11}\\
&\quad-\frac{1}{R_2-R_1}\cdot\Big(\frac{A_{20}-A_{10}}{R_2-R_1}-A_{11}\Big)\cdot\frac{R^{n+2}}{n(n+1)(n+2)}\cdot\\
&\qquad\qquad\Big\{(n+1)\cdot n\cdot t^{n+2}-2n(n+2)\cdot t^{n+1}+(n+1)(n+2)\cdot t^n-2\Big\}\\
&\quad\quad-\frac{R_1^{n+3}}{n(n+1)}\cdot\Big(-\frac{n(n+1)}{(n+3)(n+2)}\cdot t^{n+3}+\frac{2n}{n+2}\cdot t^{n+2}-t^{n+1}+\frac{2t}{n+2}-\frac{2n}{(n+2)(n+3)}\Big)\cdot\\
&\quad\qquad\qquad\Big\{\frac{A_{21}-A_{11}}{(R_2-R_1)^2}-\frac{2}{(R_2-R_1)^2}\cdot(\frac{A_{20}-A_{10}}{R_2-R_1}-A_{11})\Big\}
\label{eq:s.5.9}
\end{split}
\end{equation}

Now, we invoke Lemma s.1 to deduce that we have
\begin{equation}\label{eq:s.5.10}
\Psi_n(t) > 0
\quad
\text{holds for all }\quad t=\frac{R_2}{R_1}>1.
\end{equation}

Hence \eqref{eq:s.5.4} implies that the system \eqref{eq:s.5.7} has a unique solution $[\mathscr C_0, \mathscr C_1, \dots, \mathscr C_4]^\top$. Thus, it follows that system \eqref{eq:s.5.6} also has a unique solution $[\mathscr C_0, \mathscr C_1, \dots, \mathscr C_4]^\top$. The proof of Lemma \ref{lem:s.4} for the case of $l=1$ is completed.
\end{proof}

\newpage
\section{The First Major Step}
\begin{lem}\label{keylemma}
    Consider $n \in \mathbb{Z}^{+}$ with $n \geq 2$ and $l \in \mathbb{Z}^{+}$ with $l = 0,1$. Consider a real-valued function $f \in C^{l}_{c}(\mathbb{R}^{n})$ which satisfies the following properties
    \begin{description}
        \item[hypothesis 1] $\mathrm{supp}\, f \subset \overline{A(R_{1},R_{2})}$, where $A(R_{1},R_{2}) = \{ x \in \mathbb{R}^{n}:R_{1} < |x| < R_{2}\}.$
        \item[hypothesis 2] $\int_{\overline{A(R_{1},R_{2})}}f(x) \, \mathrm{Vol}_{\mathbb{R}^{n}}(x) = 0$.
        \item[hypothesis 3] $f|_{\overline{A(R_{1},R_{2})}}$ is real analytic on $\overline{A(R_{1},R_{2})}$.
    \end{description}
    Define the real-valued function $g_{2}: S^{n-1} \to \mathbb{R}$ on the standard unit sphere by 
    \begin{equation}
        g_{2}(y) = \int_{0}^{R_{2}} f(\rho y)\rho^{n-1} \ \dd\rho = \int_{R_{1}}^{R_{2}}f(\rho y)\rho^{n-1} \ \dd\rho,
    \end{equation}
    for all $y \in S^{n-1}$. Next, consider $\varphi:S^{n-1} \to \mathbb{R}$ be the unique solution to the Poisson equation $- \Delta_{S^{n-1}}\varphi = g_{2}$ on $S^{n-1}$ which satisfies $\int_{S^{n-1}}\varphi \ \mathrm{Vol}_{S^{n-1}} = 0$. Define a $(n-1)$-form $\Psi_{2}$ on $\mathbb{R}^{n}$ as follows: for all $x \in \real^{n}$,
    \begin{equation}
        \left.\Psi_{2}\right|_{x} = \left(\int_{0}^{|x|}f(\rho{x\over |x|})\rho^{n-1} \ \dd\rho\right) {1\over |x|^{n}} \sum_{k = 1}^{n}(-1)^{k}x^{k}\dd x^{1} \wedge \cdots\wedge \dd x^{k-1} \wedge \dd x^{k+1} \wedge \cdots \wedge \dd x^{n} - \left.\dd_{\real^{n}}(\chi\mathbf{r}^{\ast}(\dd_{S^{n-1}}(\varphi \mathrm{Vol_{S^{n-1}}})))\right|_{x}.
    \end{equation} 
    with $\mathbf{r}: \mathbb{R}^{n} - \{O\} \to S^{n-1}$ be the standard retraction map $\mathbf{r}(x) = {x\over |x|}$, and the cut-off function $\chi:\mathbb{R}^{n} \to \mathbb{R}$ is given by
    \begin{equation}
    \chi(x) = \begin{cases}\sin^{l+3}\left(\frac{1}{2}\cdot \frac{R_{1}}{R_{2}- R_{1}}\cdot(|x|- R_{1})\right), & \text{if $R_{1}\leq |x| \leq R_{2}$} \\
    0,& \text{if $x \in \real^n$ satisfies $0 \leq |x| \leq R_{1}$}\\
    1, &\text{if $x \in \real^n$ satisfies $|x| \geq R_{2}$}
    \end{cases} 
\end{equation}
Then it follows that the $(n-1)$-form $\Psi_{2}$ satisfies the following properties:
\begin{itemize}
    \item $\Psi_{2}\in C^{l+1}_{c}(\bigwedge^{n-1}T^{\ast}\mathbb{R}^{n})$.
    \item $\mathrm{supp}\, \Psi_{2} \subset \overline{A(R_{1},R_{2})}$.
    \item $\left.\Psi_{2}\right|_{\overline{A(R_{1},R_{2})}}$ is real analytic on $\overline{A(R_{1},R_{2})}$.
    \item $f\ \mathrm{Vol}_{\mathbb{R}^{n}} = \dd_{\mathbb{R}^{n}}\Psi_{2}$ holds on $\mathbb{R}^{n}$.
\end{itemize}
\end{lem}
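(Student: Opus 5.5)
The plan is to verify the four bullet points separately, working in polar coordinates $x=\rho w$, $\rho=|x|$, $w=\mathbf{r}(x)\in S^{n-1}$.

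\emph{Notation.} Put $G(x)=\int_0^{|x|}f(\rho\tfrac{x}{|x|})\rho^{n-1}\,\dd\rho$, and let $\sigma=\pm\frac{1}{|x|^{n}}\sum_k(-1)^k x^k\,\dd x^1\wedge\cdots\widehat{\dd x^k}\cdots\wedge\dd x^n$. Up to the orientation sign fixed by the paper's convention, $\sigma=\mathbf{r}^*\mathrm{Vol}_{S^{n-1}}$. I read the form $\dd_{S^{n-1}}(\varphi\,\mathrm{Vol}_{S^{n-1}})$ in the statement as the $(n-2)$-form $\beta$ on $S^{n-1}$ (essentially $\mp\ast_{S^{n-1}}\dd\varphi$) that satisfies $\dd_{S^{n-1}}\beta=-\Delta_{S^{n-1}}\varphi\,\mathrm{Vol}_{S^{n-1}}=g_2\,\mathrm{Vol}_{S^{n-1}}$. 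Such a $\varphi$, and hence $\beta$, exists because hypothesis 2 gives $\int_{S^{n-1}}g_2\,\mathrm{Vol}_{S^{n-1}}=\int_{A(R_1,R_2)}f\,\mathrm{Vol}_{\real^n}=0$. Then $\Psi_2=G\sigma-\dd(\chi\,\mathbf{r}^*\beta)$.

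\textbf{Step 1: the equation $\dd\Psi_2=f\,\mathrm{Vol}_{\real^n}$.} Since $\sigma$ is closed on $\real^n-\{O\}$, we have $\dd(G\sigma)=\dd G\wedge\sigma=\partial_\rho G\,\dd\rho\wedge\sigma$. Also $\dd\rho\wedge\sigma=\rho^{1-n}\mathrm{Vol}_{\real^n}$ and $\partial_\rho G=f\rho^{n-1}$, so $\dd(G\sigma)=f\,\mathrm{Vol}_{\real^n}$. The exact term has zero differential. For $|x|\le R_1$ we have $G=0$ and $\chi=0$, so $\Psi_2\equiv0$ on a neighbourhood of $O$ and the equation holds there as well.

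\textbf{Step 2: the support.} For $|x|\ge R_2$, hypothesis 1 gives $G(x)=g_2(\mathbf{r}(x))$, so $G\sigma=\mathbf{r}^*(g_2\mathrm{Vol}_{S^{n-1}})$. On this region $\chi\equiv1$, so $\dd(\chi\,\mathbf{r}^*\beta)=\mathbf{r}^*\dd\beta=\mathbf{r}^*(g_2\mathrm{Vol}_{S^{n-1}})$, and the two terms cancel. Together with $\Psi_2\equiv0$ for $|x|\le R_1$, this gives $\supp\Psi_2\subset\overline{A(R_1,R_2)}$.

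\textbf{Step 3: real analyticity on the closed annulus.} By hypothesis 3, $G$ is real analytic on $\overline{A(R_1,R_2)}$ and so is $g_2$. Elliptic (analytic) regularity for $-\Delta_{S^{n-1}}$ on the analytic manifold $S^{n-1}$ makes $\varphi$, and hence $\beta$, real analytic. The restriction of $\chi$ to $[R_1,R_2]$ is a composition of $\sin$ with an affine function of $|x|$, hence analytic near the closed annulus (which stays away from $O$). Therefore $\Psi_2|_{\overline{A(R_1,R_2)}}$ is real analytic.

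\textbf{Step 4: $C^{l+1}$ regularity across $|x|=R_1$ and $|x|=R_2$.} This is the main step. I would work in local charts $U\times(0,\infty)\ni(\bar y,\rho)$ adapted to $\mathbf{r}$, and apply Lemma \ref{lem:s.3} (Statement 1 at $R_1$, Statement 2 at $R_2$) to each coefficient function of $\Psi_2$. Property 1 of that lemma is Step 3. For Properties $C^l$ and $\partial_\rho(\cdot)\in C^l$:
\begin{itemize}
\item In the $G\sigma$ part, the angular derivatives of $G$ inherit the $C^l$ regularity of $f$, and $\partial_\rho G=f\rho^{n-1}\in C^l$.
\item In the exact part, $\dd\chi\wedge\mathbf{r}^*\beta+\chi\,\mathbf{r}^*(g_2\mathrm{Vol}_{S^{n-1}})$, we need $\chi\in C^{l+2}$ in $\rho$. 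At $R_1$ this is supplied by the factor $\sin^{l+3}$, which vanishes to order $l+3$.
\end{itemize}
The delicate point is $R_2$. There the matching of $\chi$ with the constant $1$, and of $G$ with $g_2$, has to be checked to sufficient order. The identity $\dd\Psi_2=f\,\mathrm{Vol}_{\real^n}$ from Step 1 already controls the $\partial_\rho$ derivatives of the tangential components. I would first try to verify that the argument of $\sin$ makes $\chi$ match $1$ to order $l+2$ at $\rho=R_2$, so that Lemma \ref{lem:s.3} applies. If this fails, I would adjust the normalisation of the cut-off so that it does. I expect this $C^{l+1}$-gluing at $R_2$ to be the main obstacle.
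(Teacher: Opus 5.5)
Your Steps 1--3 are correct and coincide with the paper's Part 4: closedness of $\sigma$ gives $\dd(G\sigma)=f\,\mathrm{Vol}_{\real^n}$, the two terms cancel for $|x|\ge R_2$ via $\dd\beta=g_2\mathrm{Vol}_{S^{n-1}}$, and $\Psi_2$ is analytic on the closed annulus. Reading $\dd_{S^{n-1}}$ as $\dd^*_{S^{n-1}}$ is also right. The gap is Step 4 at $|x|=R_2$, which you leave as ``verify, otherwise renormalise''. The verification fails, and no renormalisation of a $\sin^{l+3}$ cut-off can fix it.

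Read literally, with $R_1$ inside the sine, $\chi$ is not even continuous at $R_2$, so take the $\pi$ of \eqref{eq:13}. With $c=\frac{\pi}{2(R_2-R_1)}$ one gets $\chi'(R_2^-)=0$ but $\chi''(R_2^-)=-(l+3)c^2\neq0$. More generally, whenever $\sin^{l+3}(a(R_2-R_1))=1$ the cosine vanishes there and $\chi''(R_2^-)=-(l+3)a^2$. So $\chi$ is $C^{1,1}$, not $C^2$, across $|x|=R_2$. On the annulus,
\[
\Psi_2=\big(G-\chi\,(g_2\circ\mathbf r)\big)\,\sigma-\chi'(|x|)\,\dd\rho\wedge\mathbf r^*\beta ,
\]
and the radial derivative of the $\dd\rho\wedge\mathbf r^*\beta$ component jumps from $(l+3)c^2$ to $0$ at $R_2$. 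The identity $\dd\Psi_2=f\,\mathrm{Vol}_{\real^n}$ you wanted to lean on does not see this derivative, since $\dd\chi'\wedge\dd\rho=0$. Hence $\Psi_2\notin C^1$ whenever $\beta\not\equiv0$, i.e.\ whenever $g_2\not\equiv0$. For example, $f=(|x|-R_1)^{l+1}(R_2-|x|)^{l+1}x^1$ on the annulus, extended by $0$, satisfies all three hypotheses and has $g_2\not\equiv0$. The first bullet of the lemma is therefore false for the prescribed cut-off, and your Step 4 cannot be completed as planned.

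The paper's proof has the same hole: it simply asserts $\chi\in C^{l+2}(\real^n)$ in \eqref{eq:16}, which is true at $R_1$ but false at $R_2$. With the given $\chi$, what does hold is that $\Psi_2$ is Lipschitz, vanishes on both boundary spheres, and satisfies $\dd\Psi_2=f\,\mathrm{Vol}_{\real^n}$ on the closed annulus. That is all the proof of Theorem \ref{TheMainTheoremWehave} actually needs.

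To get the $C^{l+1}$ bullet you must change the form of the cut-off, not just its normalisation. For instance, take $\chi(x)=P\big(\tfrac{|x|-R_1}{R_2-R_1}\big)$ on the annulus with $P$ flat to order $l+3$ at both ends: $P(s)=10s^3-15s^4+6s^5$ for $l=0$, and $P(s)=35s^4-84s^5+70s^6-20s^7$ for $l=1$. Then $1-\chi$ and $\chi'$ vanish to order at least $l+2$ at $R_2$. Since $f$ is $C^l$, supported in $\overline{A(R_1,R_2)}$ and analytic there, it vanishes to order $l+1$ on $|x|=R_2$. Hence $G-g_2\circ\mathbf r=-\int_{|x|}^{R_2}f(s\,\mathbf r(x))s^{n-1}\,\dd s$ vanishes to order $l+2$. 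Both coefficients in the display then glue to $0$ in $C^{l+1}$, and your Lemma \ref{lem:s.3} argument goes through.
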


\begin{proof}
Let $R_{1} < R_{2}$, $n \in \mathbb{Z}^{+}$ satisfies $n\geq 3$. Take $l \in \mathbb{Z}^{+}$ with $l =0,1$. As in the {hypothesis} of {Lemma} \ref{keylemma}, consider $f \in C^{l}_{c}(\mathbb{R}^{n})$ which satisfies the following properties: 
\begin{description}
    \item[hypothesis 1] \label{eq:1} $\supp f \subset \overline{A(R_{1},R_{2})}$, where $A(R_{1},R_{2}) = \{x \in \mathbb{R}^{n}: R_{1} < |x| < R_{2}\}$.
    \item[hypothesis 2] \label{eq:2} $\int\limits_{A(R_{1},R_{2})}f(x) \, \left.\mathrm{Vol_{\mathbb{R}^{n}}}\right|_{x} = 0$.
    \item[hypothesis 3] \label{eq:3} $\left.f\right|_{\overline{A(R_{1},R_{2})}}$ is real analytic on $A(R_{1},R_{2})$.
\end{description}
Note that {hypothesis 1} is equivalent to
\begin{equation}\begin{aligned}
&\text{There exists some $0 < \delta_{0} <1$ sufficiently small for which we can find}\\ &\text{some real analytic function $\tilde{f}:A(R_{1}(1-\delta_{0}),R_{2}(1+ \delta_{0})) \to \mathbb{R}^{1}$ such that $\tilde{f}|_{A(R_{1},R_{2})} = f$.} \label{eq:4}
\end{aligned}\end{equation}

\textbf{Part 1. Define the differential form $\eta$.}
Consider $F:[0,1]\times \mathbb{R}^{n} \to \mathbb{R}^{n}$ which is given by
\begin{equation}
    F(t,x) = tx \qquad \text{for all $t \in [0,1]$, and all $x \in \real^{n}$.} \label{eq: 5}
\end{equation}
We now strictly follow the idea of Spivak on page 265 of his textbook \cite{Spivak} (see Step(2) of Spivak's proof of Theorem 9 of chapter 8), and we consider the following differential form:
\begin{equation}\label{eq:6}
\begin{aligned}
    \left.\eta\right|_{x} :&= \left(\int_{0}^{|x|} f(\rho \cdot \frac{x}{|x|}) \ \dd\rho\right) \frac{1}{|x|^{n}}\cdot \sum_{k = 1}^{n}(-1)^{k+1}x^{k} \dd x^{1} \wedge \cdots \wedge \widehat{\dd x^{k}} \wedge \cdots \wedge \dd x^{n} \\
    & = \int_{0}^{1} f(\tau x)\tau^{n-1}d\tau \cdot \sum_{k = 1}^{n} (-1)^{k+1}x^{k} \dd x^{1} \wedge \cdots \wedge \widehat{\dd x^{k}} \wedge \cdots \wedge \dd x^{n} \quad \text{for all $x \in \real^{n}$}. 
\end{aligned}
\end{equation}

\textbf{Part 2. Regularity of $\eta$.}
Showing that $\eta \in C^{l+1}(\bigwedge^{n-1}T^{\ast}\mathbb{R}^{n})$ amounts to the same as showing that the following function belongs to $C^{l+1}(\mathbb{R}^{n})$
\begin{equation}\label{eq:add.1}
    \mathscr{H}(x) = \int_{0}^{|x|}f(\rho{x\over |x|})\rho^{n-1} \ \dd\rho \qquad \text{for all $x \in \mathbb{R}^{n}$.}    
\end{equation}
That is, we need to show 
\begin{equation} \label{eq:add.2}
\mathscr{H} \in C^{l+1}(\real^{n}).
\end{equation}
Note that $\mathscr{H}|_{B_{0}(R_{1})} \equiv 0$, so as long as we can prove
\begin{equation} \label{eq:add.3}
    \text{$\mathscr{H}\in C^{l+1}(\mathbb{R}^{n} - \{tv:t > 0\})$, fo any unit vector $v \in S^{n-1}$}
\end{equation}
It follows that \eqref{eq:add.2} holds.

Now, take $v \in S^{n-1}$, up to a rotation about the origin, we may treat the line $\{tv: t >0\}$ as the positive $x^{n}$-axis of some suitable Euclidean coordinate system $(x^{1},\dots,x^{n})$. Next, choose the spherical coordinate system $(\rho,\theta^{1},\dots,\theta^{n-1})$ on $\real^n - \{tv:t > 0\}$, adopted to that Euclidean coordinate system, and we consider the associated local parametrisation $\Phi = (\rho,\theta^{1},\dots, \theta^{n-1}):(0 , +\infty) \times (0,\pi)^{n-2} \times (0,2\pi) \to \real^{n} - \{tv:v>0\}$. Take $G:(0 , +\infty) \times (0,\pi)^{n-2} \times (0,2\pi) \to \real^1$ be given by $G = \mathscr{H} \circ \Phi$. It is clear that proving \eqref{eq:add.3} amounts to the same as showing that 
\begin{equation}\label{eq:add.4}
    G \in C^{l+1}((0, +\infty) \times (0,\pi)^{n-2} \times (0,2\pi))
\end{equation}
For convenience, we write $U = (0,\pi)^{n-2} \times (0,2\pi) \subset \mathbb{R}^{n-1}$.

Observe that $f \in C^{l}_{c}(\real^n)$ and {hypothesis 1}, {hypothesis 3} and \eqref{eq:add.3} implies that $G$ satisfies property 1 and property 2 of {Lemma \ref{lem:s.3}} and the hypothetical condition of statement 1 of {Lemma \ref{lem:s.3}}, we deduce that \begin{equation}\label{eq:add.5}G \in C^{l+1}((0,R_{2})\times U),\end{equation} with $U = (0,\pi)^{n-2} \times (0,2\pi) \subset \mathbb{R}^{n-1}$. (Note that here the $\rho$-direction plays the role of the $y_{n}$-direction on {Lemma \ref{lem:s.3}}).

Next, in order to show that
\begin{equation}\label{eq:add.6}
    G \in C^{l+1}((R_{1}, +\infty) \times U),
\end{equation}
consider $\mathscr{G}:U \to \real^{1}$ given by
\begin{equation}\label{eq:add.7}
\begin{aligned}
    \mathscr{G}(\theta^1, \dots, \theta^{n-1}) &= \int_{R_{1}}^{R_{2}} f\circ \Phi(\rho,\theta^{1},\dots,\theta^{n-1})\rho^{n-1} \ \dd\rho \\
    &= \int_{R_{1}}^{R_{2}}f(\rho\cdot \Phi(1, \theta^{1}, \dots, \theta^{n-1}))\rho^{n-1} \ \dd\rho \qquad \text{for $(\theta^{1},\dots, \theta^{n-1}) \in U$.}
\end{aligned}
\end{equation}

{hypothesis 3} and \eqref{eq:add.7} ensures that
\begin{equation}\label{eq:add.8}
    \text{$\mathscr{G}$ is real-analytic on $U$.}
\end{equation}
Consider the projection $\pi_{U}:(0, +\infty)\times U \to U$ given by $\pi_{U}(\rho, \theta^{1}, \dots,\theta^{n-1}) = (\theta^{1}, \dots,\theta^{n-1})$. Define $G_{\sharp}:U \times(0,+\infty) \to \real$ by $G_{\sharp} = G - \mathscr{G} \circ \pi_{U}$. By our construction and \eqref{eq:add.8}, etc., it is easy to check that $G_{\sharp}$ satisfies {property 1}, {property 2} of {Lemma \ref{lem:s.3}}. So by invoking {statement 2} of {Lemma \ref{lem:s.3}}, we deduce that $$G - \mathscr{G} \circ \pi_{U} \in C^{l+1}((R_{1}, +\infty) \times U).$$Thus $G \in C^{l+1}((R_1, + \infty) \times U)$ must hold. This gives the validity of \eqref{eq:add.6}. Now, \eqref{eq:add.5} and \eqref{eq:add.6} imply \eqref{eq:add.4}.
By tracing back all the logical steps, the validity of \eqref{eq:add.4} will imply the validity of \eqref{eq:add.1}, and thus the validity of \eqref{eq:15}.

\textbf{Part 3. About Poisson equation and definition of $\Psi_{2}$.}
Now, \eqref{eq:6}, {hypothesis 1}, {hypothesis 3} together implies 
\begin{equation}
    \eta \in C^{l+1}(\bigwedge^{n-1}T^{\ast}\real^{n}) \label{eq:15}
\end{equation}
It is well-known that
\begin{equation}
    f\cdot \mathrm{Vol}_{\mathbb{R}^{n}} = d\eta \qquad \text{holds on $\real^{n}$} \label{eq:7}    
\end{equation}
(this is exactly the first line which appears in page 367 of \cite{Spivak}.)

Defines
\begin{equation}
    g(y) = \int_{R_{1}}^{R_{2}} f(\rho y) \rho^{n-1} \ \dd\rho = \int_{0}
^{R_{2}} f(\rho y)\rho^{n-1} \ \dd\rho \qquad \text{for all $y \in S^{n-1}$} \label{eq:8}
\end{equation}

Note that the second equal sign of \eqref{eq:8} comes from {hypothesis 1}. Then {hypothesis 3} implies that $g$ is real-analytic on $S^{n-1}$.

Now {hypothesis 1} implies that $\eta$ as given by \eqref{eq:6} satisfies: 
\begin{equation}
    \left.\eta\right|_{x} \equiv 0 \qquad \text{for all $x \in B_{0}(R_{1}) = \{y\in \real^n: |y| < R_{1}\}$.}
\end{equation}
Let $\varphi: S^{n-1} \to \real^{1}$ be the unique solution to the following Poisson equation:
\begin{equation}\label{eq:11}
    \begin{cases}
        - \Delta_{S^{n-1}}\varphi = g & \text{on $S^{n-1}$,} \\ \int_{S^{n-1}}\varphi \mathrm{Vol}_{S^{n-1}} = 0
    \end{cases}
\end{equation}
Since $g$ is real analytic on $S^{n-1}$, standard regularity theory ensures that the solution function $\varphi$ to \eqref{eq:11} must be real-analytic on $S^{n-1}$.

Define
\begin{equation}\label{eq:rmkuse2}\beta:= d^{\ast}_{S^{n-1}}(\varphi \mathrm{Vol}_{S^{n-1}}).\end{equation} Then the real-analyticity of $\varphi$ on the real-analytic submanifold $S^{n-1}$ of $\real^{n}$ implies that $\beta$ is a real-analytic $(n-2)$-form on $S^{n-1}$. A direct computation easily gives
\begin{equation}
    g\mathrm{Vol}_{S^{n-1}} = \dd_{S^{n-1}}\beta \qquad \text{on $S^{n-1}$} \label{eq:12}
\end{equation}
Now \eqref{eq:6}, \eqref{eq:8}, \eqref{eq:12} gives 
\begin{equation}\label{eq:remarkuse1}
\begin{aligned}
    \left.\eta\right|_{x} &= \int_{R_{1}}^{R_{2}}f(\rho\frac{x}{|x|})\rho^{n-1} \ \dd\rho \cdot \left.\mathbf{r}^{\ast}(\mathrm{Vol}_{S^{n-1}})\right|_{x} \\
    &= \left.\mathbf{r}^{\ast}\left(g\mathrm{Vol}_{S^{n-1}}\right)\right|_{x} \\
    &= \left.\mathbf{r}^{\ast}\left(\dd_{S^{n-1}}\beta\right)\right|_{x} \\
    &= \left.\mathbf{r}^{\ast}\left(\dd_{S^{n-1}}\dd_{S^{n-1}}^{\ast}\left(\varphi \mathrm{Vol}_{S^{n-1}}\right)\right)\right|_{x}
\end{aligned}
\end{equation}

where $\mathbf{r}:\real^{n} - \{O\} \to S^{n-1}$ is the standard retraction map given by $\mathbf{r}(x) = \frac{x}{|x|}$ for all $x \in \real^{n} - \{0\}$.

Define the radially symmetric cut-off function $\chi:\real^{n} \to \real^{1}$ by 
\begin{equation}
    \chi(x) = \begin{cases}\sin^{l+3}\left(\frac{1}{2}\cdot \frac{\pi}{R_{2}- R_{1}}\cdot(|x|- R_{1})\right), & \text{if $R_{1}\leq |x| \leq R_{2}$} \\
    0,& \text{if $x \in \real^n$ satisfies $0 \leq |x| \leq R_{1}$}\\
    1, &\text{if $x \in \real^n$ satisfies $|x| \geq R_{2}$}
    \end{cases} \label{eq:13}
\end{equation}

We now consider the following differential form: 
\begin{equation}\label{eq:14}
    \left.\Psi_{2}\right|_{x} = \left.\eta\right|_{x} - \left.\dd_{\real^{n}}(\chi\mathbf{r}^{\ast}(\dd^{\ast}_{S^{n-1}}(\varphi \mathrm{Vol}_{S^{n-1}})))\right|_{x} \qquad \text{for all $x \in \real^{n}$}. 
\end{equation}
where $\eta$ is given by \eqref{eq:6}, and the cut-off function $\chi$ is given by \eqref{eq:13} and $\varphi$ is the real-analytic solution to \eqref{eq:11}.

\textbf{Part 4. Checking the properties and the conclusion.}
Recall that \eqref{eq:6}, {hypothesis 1}, {hypothesis 3} together implies 
\begin{equation}
    \eta \in C^{l+1}(\bigwedge^{n-1}T^{\ast}\real^{n})
\end{equation}
And \eqref{eq:13} implies 
\begin{equation}
    \chi \in C^{l+2}(\real^{n}) \label{eq:16}
\end{equation}
Thus, \eqref{eq:16} and \eqref{eq:15} as applied to \eqref{eq:14} implies
\begin{equation}
    \Psi_{2} \in C^{l+1}(\bigwedge^{n-1}T^{\ast}\real^{n}) \label{eq:17}
\end{equation}
(note that in getting \eqref{eq:17}, we have also used the real-analyticity of $\mathbf{r}^{\ast}\left(\dd^{\ast}_{S^{n-1}}(\varphi \mathrm{Vol}_{S^{n-1}})\right)$ on $\real^{n} - \{0\}$.)

Now \eqref{eq:12} together with $\chi \equiv 1$ on $\real^{n} - B_{0}(R_{2})$ imply
\begin{equation}\begin{aligned}
    \left.\Psi_{2}\right|_{x}  &\overset{\text{\eqref{eq:14}}}{=}\left.\eta\right|_{x} - \left.\dd_{\real^{n}}\left(1\cdot \mathbf{r}^{\ast}\left(\varphi \mathrm{Vol}_{S^{n-1}}\right)\right)\right|_{x} \\
    &= \left.\mathbf{r}^{\ast}\left( \dd_{S^{n-1}}\dd^{\ast}_{S^{n-1}}\left(\varphi \mathrm{Vol}_{S^{n-1}}\right)\right)\right|_{x} - \left. \dd_{\real^{n}}\left(\mathbf{r}^{\ast}\left(\dd^{\ast}_{S^{n-1}}\left(\varphi \mathrm{Vol}_{S^{n-1}}\right)\right)\right)\right|_{x} \\
    &= 0 \qquad \text{for all $x \in \mathbb{R}^{n} - B_{0}(R_{2})$.} \label{eq:18}
\end{aligned}\end{equation}
Also note that {hypothesis 1} and \eqref{eq:6} implies
\begin{equation}
    \left. \eta \right|_{x} \equiv 0 \qquad \text{for all $x \in \overline{B_{0}(R_{1})}$.} \label{eq:19}
\end{equation}
\eqref{eq:19} together with $\chi \equiv 0$ on $\overline{B_{0}(R_{1})}$ imply 
\begin{equation}
    \left. \Psi_{2} \right|_{x} = 0 - \left. \dd_{\mathbb{R}^{n}}\left(0\cdot \mathbf{r}^{\ast}\left (\dd^{\ast}_{S^{n-1}}\left(\varphi \mathrm{Vol}_{S^{n-1}}\right)\right)\right) \right|_{x} = 0 \qquad {\text{for all $x \in \overline{B_{0}(R_{1})}$}.} \label{eq:20}
\end{equation}
Now \eqref{eq:18} and \eqref{eq:20} imply: 
\begin{equation}
\mathrm{supp}\, \Psi_{2} \subset \overline{A(R_{1},R_{2})} \label{eq:21}
\end{equation}

Note that \eqref{eq:6} and {hypothesis 1} implies 
\begin{equation}
    \left. \eta \right|_{x} = \int_{R_{1}}^{|x|} f(\rho \cdot \frac{x}{|x|})\rho^{n-1} \ \dd\rho \cdot \left.\mathbf{r}^{\ast}(\mathrm{Vol}_{S^{n-1}})\right|_{x} \qquad \text{for all $x \in \overline{A(R_{1},R_{2})}$} \label{eq:22}
\end{equation}

Then \eqref{eq:22} and {hypothesis 3} imply that
\begin{equation}
    \left.\eta \right|_{\overline{A(R_{1},R_{2})}} \text{ is real analytic on $\overline{A(R_{1},R_{2})}$} \label{eq:23} 
\end{equation}

Note \eqref{eq:13} and \eqref{eq:14} imply
\begin{equation}
    \left. \Psi_{2} \right|_{x} = \left. \eta \right|_{x} - \left.\dd_{\mathbb{R}^{n}}\left( \sin^{l+3}\left( \frac{\pi}{2(R_{2} - R_{1})}(|x| - R_{1})\right)\cdot \mathbf{r}^{\ast}(\dd_{S^{n-1}}(\varphi \mathrm{Vol}_{S^{n-1}})) \right)\right|_{x} \qquad \text{for all $x \in \overline{A(R_{1},R_{2})}$} \label{eq:24}
\end{equation}

So \eqref{eq:23} and the real-analyticity of $\varphi$ on $S^{n-1}$ imply
\begin{equation}
    \text{$\left.\Psi_{2}\right|_{\overline{A(R_{1},R_{2})}}$  is real analytic on $\overline{A(R_{1},R_{2})}$.} \label{eq:25}
\end{equation}

Of course, \eqref{eq:7} and \eqref{eq:14} gives 
\begin{equation}
f \cdot \mathrm{Vol}_{\mathbb{R}^n} = \dd_{\mathbb{R}^{n}}\Psi_{2} \qquad \text{on $\mathbb{R}^{n}$} \label{eq:26}   
\end{equation}

By gathering results: \eqref{eq:17}, \eqref{eq:21}, \eqref{eq:25} and \eqref{eq:26}. The proof of {Lemma \ref{keylemma}} is completed.
\end{proof}

\begin{remark}
For the case $n = 2$, all the steps are the same except the analyticity of $\beta$ define in \eqref{eq:rmkuse2}. Here is the argument. The equation \eqref{eq:8} defines a function on $S^{1}$. By choosing a parametrization $\theta \in [0,2\pi)$ of $S^{1}$, the form $g(y)\mathrm{Vol}_{S^{n-1}}$ is essentially $g(\theta) \ \dd\theta$. The {hypothesis 3} implies 
\begin{equation}\label{eq:rmk1}
    \int_{0}^{2\pi} g(\theta) \ \dd\theta = 0
\end{equation}
Recall that the de Rham theorem says that $$\int_{S^1}:H^{1}(S^{1}) \to \real$$ is an isomorphism. So \eqref{eq:rmk1} implies that $g \ d\theta$ is an exact form. Suppose $g \ d\theta = d\beta$ for some smooth function $\beta(\theta)$ on $S^{1}$. In fact $\beta(\theta)$ can be explicitly calculated as
\begin{equation}\label{eq:rmk2}
    \beta(\theta) = \int_{0}^{\theta} g(\tau) \ d\tau + \mathrm{const.}
\end{equation}
Since $g$ is a real-analytic function, the function $\beta$ is real analytic also. Hence Lemma \ref{keylemma} is also valid for $n = 2$.
\end{remark}

We need to see the way in which Lemma \ref{keylemma} directly leads to a real analytic solution to the divergence equation $\dv_{\mathbb{R}^n} v = f$ with the function $f$ satisfying the same hypothesis as stated in Lemma \ref{keylemma}. We treat $\mathbb{R}^n$ as a Riemannian manifold with the standard Euclidean metric $g_{\mathbb{R}^n}(\cdot , \cdot )$, and apply the Hodge star operator
\begin{equation*}
*_{\mathbb{R}^n} : \bigwedge^{n-1}T_x^*\mathbb{R}^n \rightarrow T_x^*\mathbb{R}^n.
\end{equation*}

In what which follows, we will also employ the natural isomorphism $\theta \mapsto \theta_{\sharp}$ from  $T_x^*\mathbb{R}^n$ to $T_x\mathbb{R}^n$ as given by

\begin{equation*}
\theta = g_{\mathbb{R}^n} ( \theta_{\sharp} , \cdot ) .
\end{equation*}
With the above preparation, it is plain to see that Lemma \ref{keylemma} immediately gives the following corollary.




\begin{cor}\label{corollaryfromCohomology}
Take $l =0,1$ and $n \in \mathbb{Z}^+$ with $n \geq 2$.  Consider two positive numbers $0< R_1 < R_2 < \infty$ and a real-valued function $f \in C_c^{l} \big ( \mathbb{R}^n \big )$ which satisfies the following properties
\begin{itemize}
\item $\supp f \subset \overline{A( R_1, R_2)}$ , where $A( R_1, R_2 ) =\{ x \in \mathbb{R}^n : R_1 < |x| < R_2 \}$.
\item $\int_{\overline{A( R_1, R_2)}} f(x) \Vol_{\mathbb{R}^n}(x) = 0$.
\item $f \Big |_{\overline{A( R_1, R_2 )}}$ is real analytic on $\overline{A( R_1, R_2 )}$.
\end{itemize}
For the case of $n \geq 3$, we consider the vector field $v$ on $\mathbb{R}^n$ as given by
\begin{equation}\label{verygoodvectorfield}
v \big |_x = (-1)^{3n+1} \Big ( *_{\mathbb{R}^n} \Big \{ \int_{0}^1 f(\tau x ) \tau^{n-1} \dd \tau \textbf{r}^*(\Vol_{S^{n-1}} ) \big |_x
- \dd_{\mathbb{R}^n} \Big ( \chi \textbf{r}^* \big ( \dd_{S^{n-1}}^* (\varphi \Vol_{S^{n-1}}) \big )  \Big ) \big |_x  \Big \} \Big )_{\sharp} ,
\end{equation}
where the cut off function is given by
\begin{equation}\label{chiGoodGood}
\chi ( x ) =  \sin^{l+3} \Big ( \frac{\pi}{2 ( R_2 - R_1  )} (|x| - R_1 ) \Big ) \mathbf{1}_{[R_1, R_2]} (|x|) + \mathbf{1}_{[ R_2, \infty  )} (|x|) ,
\end{equation}
and that $\varphi : S^{n-1} \rightarrow \mathbb{R}$ is the solution to the following Poission equation on $S^{n-1}$.
\begin{equation*}
-\Delta_{S^{n-1}} \varphi (y) = \int_{R_1}^{R_2} f (\rho  y  ) \rho^{n-1}  \dd \rho ,
\end{equation*}
for all $y \in S^{n-1}$. Then, it follows that the vector field $v$ satisfies the following properties:
\begin{itemize}
\item $v \in C_c^{l+1}\big ( T\mathbb{R}^n \big )$
\item $\supp v \subset \overline{A( R_1, R_2 )}$.
\item $v \big |_{\overline{A( R_1, R_2 )}}$ is real analytic on $\overline{A( R_1 , R_2 )}$.
\item $\dv_{\mathbb{R}^n} v = f$ holds on $\mathbb{R}^n$.
\end{itemize}
For the case of $n=2$, the same conclusions hold as well for the vector field $v$ on $\mathbb{R}^2$ which is given by:
\begin{equation}\label{verygoodvectorfieldnequals2}
v \big |_x = - \Big ( *_{\mathbb{R}^2} \Big \{ \int_{0}^1 f(\tau x ) \tau \dd \tau \textbf{r}^*(\Vol_{S^{1}} ) \big |_x
- \dd_{\mathbb{R}^2} \Big ( \chi \textbf{r}^* h \Big ) \big |_x  \Big \} \Big )_{\sharp} ,
\end{equation}
for all $x \in \mathbb{R}^2$, where $\chi$ is given by \eqref{chiGoodGood}, and $h : S^1 \rightarrow \mathbb{R}$ is the real analytic function on $S^1$ which satisfies $\int_{S^1} h \Vol_{S^1} = 0$ and the equation
\begin{equation*}
\dd_{S^1} h \big |_y =  \int_{R_1}^{R_2} f (\rho  y  ) \rho  \dd \rho \Vol_{S^1} \big |_y ,
\end{equation*}
for all $y \in S^1$.
\end{cor}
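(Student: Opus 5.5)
The plan is to deduce Corollary \ref{corollaryfromCohomology} directly from Lemma \ref{keylemma}, by converting the $(n-1)$-form $\Psi_2$ into a vector field through the Hodge star and the musical isomorphism. The single new ingredient is a pointwise identity in Euclidean $\mathbb{R}^n$: for any $C^1$ vector field $w=\sum_k w^k\partial_k$, set
\begin{equation*}
\iota_w \Vol_{\mathbb{R}^n}=\sum_{k}(-1)^{k+1}w^k\,\dd x^1\wedge\cdots\wedge\widehat{\dd x^k}\wedge\cdots\wedge \dd x^n .
\end{equation*}
Then $\dd_{\mathbb{R}^n}(\iota_w\Vol_{\mathbb{R}^n})=(\dv_{\mathbb{R}^n} w)\Vol_{\mathbb{R}^n}$, which is Cartan's formula applied to the closed top-degree form.

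Next I will compute $*_{\mathbb{R}^n}$ on the basis $(n-1)$-forms. One gets $*\big(\dd x^1\wedge\cdots\wedge\widehat{\dd x^k}\wedge\cdots\wedge\dd x^n\big)=(-1)^{n-k}\dd x^k$. Combining this with the previous display gives
\begin{equation*}
\big(*_{\mathbb{R}^n}\iota_w\Vol_{\mathbb{R}^n}\big)_\sharp=(-1)^{n+1}w,
\end{equation*}
that is, $w=(-1)^{n+1}(*\omega)_\sharp$ whenever $\omega=\iota_w\Vol$. Since $(-1)^{3n+1}=(-1)^{n+1}$, the vector field in \eqref{verygoodvectorfield} is exactly the unique $v$ with $\iota_v\Vol_{\mathbb{R}^n}=\Psi_2$. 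Here $\Psi_2$ is the form of Lemma \ref{keylemma}, written in the equivalent form
\begin{equation*}
\eta=\int_0^1 f(\tau x)\tau^{n-1}\,\dd\tau\cdot\sum_k(-1)^{k+1}x^k\,\dd x^1\wedge\cdots\wedge\widehat{\dd x^k}\wedge\cdots\wedge \dd x^n=\mathscr{H}\,\mathbf{r}^*\Vol_{S^{n-1}} .
\end{equation*}
I have to check this match carefully against \eqref{eq:6}, using $\mathbf{r}^*\Vol_{S^{n-1}}=|x|^{-n}\iota_x\Vol$. The sign convention in the statement of the Lemma, with $(-1)^k$ versus $(-1)^{k+1}$, needs to be reconciled with the proof. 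I take the proof's version \eqref{eq:6}, since that is the one satisfying $\dd\eta=f\Vol$.

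With the identification $\iota_v\Vol=\Psi_2$ in hand, each conclusion transfers from Lemma \ref{keylemma}:
\begin{itemize}
\item The components of $v$ are, up to sign, the coefficients of $\Psi_2$. So $\Psi_2\in C^{l+1}_c$ gives $v\in C^{l+1}_c(T\mathbb{R}^n)$.
\item The support inclusion in $\overline{A(R_1,R_2)}$ carries over unchanged.
\item Real analyticity of $\Psi_2$ on $\overline{A(R_1,R_2)}$ gives real analyticity of $v$ there.
\item Finally, $(\dv v)\Vol=\dd(\iota_v\Vol)=\dd\Psi_2=f\Vol$ on $\mathbb{R}^n$. Since $l+1\ge1$, the classical derivative makes sense, and so $\dv v=f$.
\end{itemize}

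For $n=2$ I follow the Remark after Lemma \ref{keylemma}. On $S^1$ the term $\dd^*_{S^1}(\varphi\Vol_{S^1})$ is a $0$-form, and it can be replaced by any real-analytic primitive $h$ with $\dd h=g\Vol_{S^1}$. Such an $h$ exists because $\int_{S^1}g\Vol_{S^1}=\int f\Vol=0$, and $h$ is the explicit integral of $g$. Normalizing $\int h=0$ fixes the constant. The same argument as above then applies, with sign $(-1)^{2+1}=-1$, matching \eqref{verygoodvectorfieldnequals2}.

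The main obstacle is purely bookkeeping. First, the Hodge star sign $(-1)^{n-k}$ must be verified so that it produces exactly the prefactor $(-1)^{3n+1}$. Second, the form inside the braces of \eqref{verygoodvectorfield}, which uses $\dd^*_{S^{n-1}}$, must agree with the $\Psi_2$ of the Lemma, whose statement contains a typo $\dd_{S^{n-1}}$ and the wrong constant in $\chi$. I would fix these by citing the version in the proof, \eqref{eq:13}--\eqref{eq:14}. Everything analytic is already contained in Lemma \ref{keylemma}.
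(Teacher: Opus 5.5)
Your route is the paper's: the paper simply declares Corollary~\ref{corollaryfromCohomology} an immediate consequence of Lemma~\ref{keylemma} via $*_{\mathbb{R}^n}$ and $\sharp$. Your sign bookkeeping is correct: $*(\dd x^1\wedge\cdots\wedge\widehat{\dd x^k}\wedge\cdots\wedge\dd x^n)=(-1)^{n-k}\dd x^k$, so $\iota_v\Vol_{\mathbb{R}^n}=\Psi_2$ iff $v=(-1)^{n+1}(*\Psi_2)_\sharp$, and $(-1)^{3n+1}=(-1)^{n+1}$.

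\textbf{The deferred match fails.} The one match you leave ``to be checked'' does not hold, and unlike the typos you list, it changes the vector field. The first term in the braces of \eqref{verygoodvectorfield} is $\int_0^1 f(\tau x)\tau^{n-1}\dd\tau\,\mathbf{r}^*\Vol_{S^{n-1}}$. By your own identity $\mathbf{r}^*\Vol_{S^{n-1}}=|x|^{-n}\iota_x\Vol_{\mathbb{R}^n}$, this equals $|x|^{-n}\eta$, not $\eta$; it coincides with neither of your two expressions for $\eta$. Read literally, $\iota_v\Vol_{\mathbb{R}^n}=\Psi_2+(|x|^{-n}-1)\eta$. For $|x|\ge R_2$ this equals $(|x|^{-n}-1)\,\mathbf{r}^*(g\Vol_{S^{n-1}})$, which is nonzero unless $g\equiv 0$. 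Its exterior derivative is $|x|^{-n}\big(f(x)-n\int_0^1 f(\tau x)\tau^{n-1}\dd\tau\big)\Vol_{\mathbb{R}^n}$. So the field as printed violates both the support and the divergence conclusions in general, and your claim that it is ``exactly'' the $v$ with $\iota_v\Vol_{\mathbb{R}^n}=\Psi_2$ is false. You must replace the first term by $\int_0^1 f(\tau x)\tau^{n-1}\dd\tau\,\iota_x\Vol_{\mathbb{R}^n}$, equivalently $\mathscr{H}\,\mathbf{r}^*\Vol_{S^{n-1}}$. This is what Theorem~\ref{TheMainTheoremWehave} actually uses, in its term $\big(\int_0^1\widetilde f(\tau x)\tau^{n-1}\dd\tau\big)\sum_k x^k\frac{\partial}{\partial x^k}$. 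The same correction is needed in \eqref{verygoodvectorfieldnequals2}. After that, your transfer of support, analyticity and $\dv v=f$ goes through.

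\textbf{The regularity claim is inherited from a false step.} This affects the paper's argument equally. Your first bullet is only as good as \eqref{eq:16}, and that claim fails at $|x|=R_2$. Put $s=\frac{\pi}{2(R_2-R_1)}(|x|-R_1)$. Then $1-\sin^{l+3}s=\frac{l+3}{2}(s-\frac{\pi}{2})^2+O\big((s-\frac{\pi}{2})^4\big)$, so $\chi$ is $C^{1,1}$ but not $C^2$ across $|x|=R_2$; the claimed $C^{l+2}$ holds only across $|x|=R_1$.

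Consider the component $-\chi'(|x|)\,\dd|x|\wedge\mathbf{r}^*\beta$ of $\Psi_2$, where $\beta=\dd^*_{S^{n-1}}(\varphi\Vol_{S^{n-1}})$ (read $\beta=h$ when $n=2$). It has a kink across $|x|=R_2$ in every direction where $\beta\neq 0$. Such directions exist whenever $g\not\equiv 0$, because $\dd_{S^{n-1}}\beta=g\Vol_{S^{n-1}}$. The rest of $\Psi_2$ is a multiple of $\mathbf{r}^*\Vol_{S^{n-1}}$, so it cannot cancel this kink. Hence $v\notin C^{l+1}$ in general, for $l=0$ and $l=1$ alike, and no $*$/$\sharp$ bookkeeping repairs this.

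To obtain the stated regularity you need a cut-off with $1-\chi$ vanishing to order $l+3$ at $R_2$ as well. One choice is the normalized primitive of $(r-R_1)^{l+2}(R_2-r)^{l+2}$ on $[R_1,R_2]$, extended by $0$ and $1$. It is $C^{l+2}$, real analytic on $\overline{A(R_1,R_2)}$, and leaves the other conclusions of Lemma~\ref{keylemma} intact.
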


\newpage
\section{The main Theorem of our paper.}\label{mainthm}

In light of Lemma \ref{lem:s.4}, we consider the following rational functions in $\mu = \frac{R_2}{R_1}- 1 > 0$.

\begin{equation}\label{rationalZero}
\begin{split}
\mathcal{Q}_{0,1} (\mu )& = \frac{n\mu^{n+2} + \big ( \sum_{\alpha=2}^{n+1} \binom{n+2}{\alpha} \frac{n^2+3n}{\alpha+1} \mu^{\alpha} \big ) + \frac{(n+2)(n+1)n}{2} \mu }{\sum_{\alpha=2}^{n+2} \binom{n+2}{\alpha} (\alpha-2) \mu^{\alpha}} \\
\mathcal{Q}_{0,2} (\mu ) & = \frac{\sum_{\beta=0}^{n+1} \beta n \binom{n+2}{\beta +1} \mu^{\beta} }{\sum_{\alpha=2}^{n+2} \binom{n+2}{\alpha} (\alpha-2) \mu^{\alpha}}
\end{split}
\end{equation}

\begin{equation}\label{rationalOne}
\begin{split}
\mathcal{Q}_{1,1} (\mu ) & = \frac{ (n+2)(n+1)n \mu + 2(n+1)\sum_{\alpha =3}^{n+2} \binom{n+2}{\alpha} \mu^{\alpha-1}  }{\sum_{\alpha=2}^{n+2} \binom{n+2}{\alpha} (\alpha-2) \mu^{\alpha}} \\
\mathcal{Q}_{1,2} (\mu ) & = \frac{\sum_{\alpha=1}^{n+2} \binom{n+2}{\alpha} (\alpha -1) (2n+2-\alpha ) \mu^{\alpha-1} }{\sum_{\alpha=2}^{n+2} \binom{n+2}{\alpha} (\alpha-2) \mu^{\alpha}}
\end{split}
\end{equation}

\begin{equation}\label{rationalTwo}
\begin{split}
\mathcal{Q}_{2,1} (\mu ) & = \frac{(n+2) \sum_{\beta =1}^n \binom{n+1}{\beta + 1} \mu^{\beta} }{ \sum_{\alpha=2}^{n+2} \binom{n+2}{\alpha} (\alpha-2) \mu^{\alpha}} \\
\mathcal{Q}_{2,2}  (\mu ) & = \frac{(n+2) \sum_{\beta =0}^n \binom{n+1}{\beta + 1} \beta \mu^{\beta} }{ \sum_{\alpha=2}^{n+2} \binom{n+2}{\alpha} (\alpha-2) \mu^{\alpha}}
\end{split}
\end{equation}

With the above preparation, we can now state and prove the main theorem of this paper as follows:

\begin{thm}\label{TheMainTheoremWehave}
Consider $n \in \mathbb{Z}^+$ with $n \geq 2$, and $0< R_1 < R_2 < \infty$. Let $f : \overline{A(R_1, R_2)}\rightarrow \mathbb{R}$ be real analytic on $\overline{A(R_1 , R_2)}$ which satisfies $\int_{\mathbb{R}^n} f \Vol_{\mathbb{R}^n} = 0$. We now define $\widetilde{f} \in C_c^0(\mathbb{R}^n)$ as follows:
\begin{itemize}
\item
  $\widetilde{f} (\rho y ) = f (\rho y ) - \sum_{k=0}^2 \big ( \frac{-1}{R_1} \big )^k \big \{ \mathcal{Q}_{k,1} \big ( \frac{R_2 - R_1}{R_1} \big ) f(R_1 y) + \mathcal{Q}_{k,2} \big (\frac{R_2 -R_1}{R_1} \big ) f(R_2 y) \big \} \rho^k$ , for all $\rho \in [R_1 , R_2]$ and all $y \in S^{n-1}$. Here, $Q_{k,a}(\mu)$ are the rational functions in $\mu = \frac{R_2}{R_1}-1$ as given by \eqref{rationalZero}, \eqref{rationalOne} and \eqref{rationalTwo}.
\item For any $\rho \in (0,R_1) \cup (R_2 , \infty )$ and any $y \in S^{n-1}$, we define $\widetilde{f}(\rho y ) = 0$.
 \end{itemize}
In the case of $n \geq 3$, consider now the vector field $\textbf{U} : \overline{A(R_1 , R_2)} \rightarrow \mathbb{R}^n$, which is defined as follows:
\begin{equation}\label{veryKeyexpressionWinWin}
\begin{split}
 \textbf{U} \big |_x
& =  \big ( \int_{0}^1 \widetilde{f}(\tau x)\tau^{n-1} \dd \tau \big ) \sum_{k=1}^n x^k \frac{\partial}{\partial x^k} \Big |_x
+(-1)^{3n} \Big ( *_{\mathbb{R}^n} \dd_{\mathbb{R}^n} \big (  \chi \textbf{r}^* \big ( \dd_{S^{n-1}}^* (\widetilde{\varphi} \Vol_{S^{n-1}}) \big )  \big ) \Big )_{\sharp} \Big |_x \\
& + \frac{1}{|x|^n} \sum_{k=0}^2 \Big ( \frac{-1}{R_1} \Big )^k \Big \{ \sum_{a=1}^2 \mathcal{Q}_{k,a} \big (\frac{R_2 - R_1}{R_1} \big ) f\big (R_a \frac{x}{|x|} \big )     \Big \} \Big (\frac{|x|^{n+k}-R_1^{n+k}}{n+k} \Big )\sum_{k=1}^n x^k \frac{\partial}{\partial x^k} \Big |_x ,
\end{split}
\end{equation}
for all $x \in \overline{A(R_1 , R_2 )}$, where
\begin{equation}\label{cutoffanotherTWO}
\chi ( x ) =  \sin^{l+3} \Big ( \frac{\pi}{2 ( R_2 - R_1  )} (|x| - R_1 ) \Big ) \mathbf{1}_{[R_1, R_2]} (|x|) + \mathbf{1}_{[ R_2, \infty  )} (|x|) ,
\end{equation}

and $\widetilde{\varphi} : S^{n-1} \rightarrow \mathbb{R}$ is the solution to the following surface Poission equation on $S^{n-1}$ with $\int_{S^{n-1}} \widetilde{\varphi}  \Vol_{S^{n-1}} = 0$.
\begin{equation}\label{surfacePoissionFinal}
\begin{split}
- &\Delta_{S^{n-1}} \widetilde{\varphi} (y)\\
  = &\int_{R_1}^{R_2} \widetilde{f} (\rho y ) \rho^{n-1} \dd \rho \\
 =& \int_{R_1}^{R_2} f (\rho y ) \rho^{n-1} \dd \rho - \sum_{k=0}^2 \Big ( \frac{-1}{R_1} \Big )^k \Big ( \sum_{a=1}^2 Q_{k,a} \big ( \frac{R_2-R_1}{R_1}\big ) f(R_a y) \Big ) \Big ( \frac{R_2^{n+k}- R_1^{n+k}}{n+k} \Big ) ,
\end{split}
\end{equation}
for all $y \in S^{n-1}$. Then, it follows that $\textbf{U}$ as given by \eqref{veryKeyexpressionWinWin} is a real analytic vector field on
$\overline{A(R_1 , R_2 )}$ which satisfies $\dv_{\mathbb{R}^n} \textbf{U} = f$ on $\overline{A(R_1 , R_2)}$ and $\textbf{U} \big |_{\partial A(R_1, R_2)} = 0$.
In the case of $n=2$, the vector field $\textbf{U} : \overline{A(R_1 , R_2)} \rightarrow \mathbb{R}^2$ is defined by
\begin{equation}\label{veryKeyexpressionWinTWO}
\begin{split}
 \textbf{U}\big |_x
& =  \big ( \int_{0}^1 \widetilde{f}(\tau x)\tau^{n-1} \dd \tau \big ) \sum_{k=1}^2 x^k \frac{\partial}{\partial x^k} \Big |_x
+ \Big ( *_{\mathbb{R}^2} \dd_{\mathbb{R}^2} \big (  \chi \textbf{r}^* \widetilde{h} \Big )_{\sharp} \Big |_x \\
& + \frac{1}{|x|^2} \sum_{k=0}^2 \Big ( \frac{-1}{R_1} \Big )^k \Big \{ \sum_{a=1}^2 \mathcal{Q}_{k,a} \big (\frac{R_2 - R_1}{R_1} \big ) f\big (R_a \frac{x}{|x|} \big )     \Big \} \Big (\frac{|x|^{2+k}-R_1^{2+k}}{2+k} \Big )\sum_{k=1}^2 x^k \frac{\partial}{\partial x^k} \Big |_x ,
\end{split}
\end{equation}
for all $x \in \overline{A(R_1 , R_2 )}$, where $\widetilde{h} : S^1 \rightarrow \mathbb{R}$ is the real-analytic function on $S^1$ which satisfies
\begin{equation}
\dd_{S^1}\widetilde{h} \big |_y = \int_{R_1}^{R_2} \widetilde{f} (\rho y ) \rho \dd \rho \Vol_{S^1} \big |_y
\end{equation}
for all $y \in S^1$, and $\int_{S^1} h \Vol_{S^1} = 0$, and that $\chi$ is given by \eqref{cutoffanotherTWO}. Then, it follows that $\textbf{U}$ as given by \eqref{veryKeyexpressionWinTWO} is a real analytic vector field on
$\overline{A(R_1 , R_2 )}$ which satisfies $\dv_{\mathbb{R}^2} \textbf{U} = f$ on $\overline{A(R_1 , R_2)}$ and $\textbf{U} \big |_{\partial A(R_1, R_2)} = 0$.
\end{thm}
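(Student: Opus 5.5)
The plan is to split $f$ on $\overline{A(R_1,R_2)}$ as $f=\widetilde f+Q$, where $Q(\rho y)=\sum_{k=0}^2\mathscr C_k(y)\rho^k$ is the polynomial-in-$\rho$ correction built by Lemma \ref{lem:s.4} with $l=0$. Its data are $A_{10}=f(R_1\cdot)$ and $A_{20}=f(R_2\cdot)$ on $S^{n-1}$. The first task is to verify that the coefficients $\mathscr C_k$ given by \eqref{eq:s.4.5}--\eqref{eq:s.4.7} coincide with
\[
\Big(\frac{-1}{R_1}\Big)^k\Big\{\mathcal Q_{k,1}\big(\tfrac{R_2-R_1}{R_1}\big)f(R_1y)+\mathcal Q_{k,2}\big(\tfrac{R_2-R_1}{R_1}\big)f(R_2y)\Big\}.
\]
This is a matter of matching \eqref{rationalZero}--\eqref{rationalTwo} against Lemma \ref{lem:s.4}. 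The sign pattern $(-1/R_1)^k$ fits $\mathscr C_1$ and $\mathscr C_0$ directly. For $\mathscr C_2$ the normalising denominator must be $\widetilde\Psi_n(t)$ as in \eqref{eq:s.5.5}, and I would recheck this against the statement's conventions. There is also a small discrepancy, $n^2+3n$ versus $n^2+2n$, which I would resolve by recomputing from \eqref{eq:s.5.5}.

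Once $\mathscr C_k$ is identified, Lemma \ref{lem:s.4} gives three facts about $\widetilde f$. It vanishes on $|x|=R_1$ and on $|x|=R_2$. Its integral over the annulus is $\int f-\int Q=0$. Its restriction to $\overline{A(R_1,R_2)}$ is real analytic, because $f(R_ay)$ is analytic on $S^{n-1}$, $\rho^k$ is analytic, and $\rho=|x|$ is analytic away from $0$. Extending $\widetilde f$ by zero therefore yields a function in $C^0_c(\mathbb R^n)$ satisfying hypotheses 1--3 of Lemma \ref{keylemma} and Corollary \ref{corollaryfromCohomology} with $l=0$.

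Applying Corollary \ref{corollaryfromCohomology} to $\widetilde f$ produces a field $v$. It lies in $C^1_c$, is supported in $\overline{A(R_1,R_2)}$, is real analytic on the closed annulus, and satisfies $\dv v=\widetilde f$. Since $v\in C^1$ and vanishes outside the open annulus, continuity gives $v|_{\partial A}=0$. I would then check that the first two terms of \eqref{veryKeyexpressionWinWin} are exactly $v$. Under Hodge star and sharp, $\int_0^1\widetilde f(\tau x)\tau^{n-1}\dd\tau\,\mathbf r^*(\Vol_{S^{n-1}})\cdot|x|^n$ becomes the radial field $(\int_0^1\cdots)\,x^k\partial_k$, and the second term carries the matching sign $(-1)^{3n}=-(-1)^{3n+1}$. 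The $\widetilde\varphi$ in \eqref{surfacePoissionFinal} is the $\varphi$ of the corollary for $\widetilde f$; the last equality there follows by integrating $Q\rho^{n-1}$.

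It then remains to solve $\dv W=Q$ with $W|_{\partial A}=0$, using the third term
\[
W(x)=\frac{1}{|x|^n}\sum_{k}\mathscr C_k(x/|x|)\,\frac{|x|^{n+k}-R_1^{n+k}}{n+k}\,x .
\]
For a radial field $W=a(\rho,y)\,x$ one has $\dv W=\rho^{1-n}\partial_\rho(\rho^n a)$ when the angular variable $y$ is held fixed, since $x=\rho\,\partial_\rho$ is tangent to rays. With $\rho^na=\sum_k\mathscr C_k(y)(\rho^{n+k}-R_1^{n+k})/(n+k)$ this gives $\dv W=\sum_k\mathscr C_k\rho^k=Q$. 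The field $W$ is real analytic on the closed annulus and vanishes at $\rho=R_1$. At $\rho=R_2$ it equals $R_2^{1-n}\int_{R_1}^{R_2}Q(\rho y)\rho^{n-1}\dd\rho$ times $y$. This is where the main obstacle lies: \eqref{eq:s.4.4} only gives that this integral has zero mean over $S^{n-1}$, not that it vanishes pointwise in $y$.

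I would handle this by reading the $\sum_k$ term in the $v$ part and the $\sum_k$ term in $W$ together. The constructed $\int_0^1\widetilde f(\tau x)\tau^{n-1}$ radial term at $|x|=R_2$ equals $R_2^{-n}\int_{R_1}^{R_2}\widetilde f\rho^{n-1}$, and this is cancelled in $v$ by the $\chi$-term. Adding $W$ reconstitutes the radial part as $|x|^{-n}\int_{R_1}^{|x|}f\rho^{n-1}\dd\rho$, which is nonzero at $R_2$ unless handled. I therefore expect to need to check carefully that the $\chi$-correction, built from $\widetilde f$, together with $W$ still gives zero boundary values. Failing that, I would replace $\widetilde\varphi$ in the $\chi$-term by the $\varphi$ associated with $f$ itself, so that Lemma \ref{keylemma} applied to the combination restores the vanishing at $R_2$. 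The case $n=2$ runs identically, with $\widetilde h$ in place of $\dd^*_{S^1}(\widetilde\varphi\Vol_{S^1})$ as in the Remark after Lemma \ref{keylemma}.
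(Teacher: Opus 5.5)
Your route is the paper's. You split $f=\widetilde f+Q$ with $Q(\rho y)=\sum_{k=0}^2\mathscr C_k(y)\rho^k$ from Lemma \ref{lem:s.4}, apply Corollary \ref{corollaryfromCohomology} with $l=0$ to $\widetilde f$, and absorb $Q$ with the radial field $W=\lambda\,\partial_\rho$, whose divergence you compute correctly. The gap is that you never prove $\mathbf U|_{|x|=R_2}=0$. You correctly reduce it to $W(R_2y)=0$, that is, to $\int_{R_1}^{R_2}Q(\rho y)\rho^{n-1}\,\dd\rho=0$ for \emph{every} $y\in S^{n-1}$, and then leave it open. Neither remedy you suggest closes it:
\begin{itemize}
\item Regrouping terms only reproduces the leftover $R_2^{1-n}\big(\int_{R_1}^{R_2}Q(\rho y)\rho^{n-1}\,\dd\rho\big)\,y$. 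At $|x|=R_2$ the $\chi$-term cancels exactly the $\widetilde f$-radial term and nothing more.
\item Replacing $\widetilde\varphi$ by the potential $\varphi_f$ of $f$ does give a field with divergence $f$ that vanishes on both spheres. But that field is not the $\mathbf U$ of \eqref{veryKeyexpressionWinWin} unless you also show $\widetilde\varphi=\varphi_f$, which is the same pointwise identity again.
\end{itemize}

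The missing observation is that Lemma \ref{lem:s.4} gives more than \eqref{eq:s.4.4}; its statement undersells its proof. The coefficients come from solving \eqref{eq:s.5.1} separately at each $w\in S^{n-1}$. The vector $(\mathscr C_0(w),\mathscr C_1(w),\mathscr C_2(w))$ is the inverse of a fixed invertible $3\times 3$ matrix applied to $(A_{10}(w),A_{20}(w),0)$, and \eqref{eq:s.5.5} is that inverse written out. The third row of the system reads $\sum_{k=0}^2\mathscr C_k(w)\,\frac{R_2^{n+k}-R_1^{n+k}}{n+k}=0$ for each $w$. This is exactly $\int_{R_1}^{R_2}Q(\rho w)\rho^{n-1}\,\dd\rho=0$ pointwise, and it is what the paper invokes in \eqref{goodvanishing}.

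Since you identify the $\mathscr C_k$ through \eqref{eq:s.5.5}, you already have this identity. It gives $W(R_2y)=0$ at once. It also makes the subtracted sum in \eqref{surfacePoissionFinal} vanish identically, so $\widetilde\varphi=\varphi_f$ and your fallback field coincides with $\mathbf U$.

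Two side remarks.
\begin{itemize}
\item Your coefficient worries resolve in favour of \eqref{rationalZero}--\eqref{rationalTwo}. The factor $n^2+3n$ and the denominator $\widetilde\Psi_n(t)=\sum_{\alpha}\binom{n+2}{\alpha}(\alpha-2)\mu^\alpha$ are what \eqref{eq:s.5.5} produces. So \eqref{eq:s.4.5} and \eqref{eq:s.4.7} contain typos.
\item For $v|_{\partial A}=0$ you only need continuity, or direct evaluation: $\chi=\chi'=0$ at $R_1$, and $\chi=1$, $\chi'=0$ at $R_2$. Do not rely on the $C^1_c$ claim, since $\chi''$ jumps across $|x|=R_2$.
\end{itemize}
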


\begin{proof}
Here, we will give the proof of Theorem \ref{TheMainTheoremWehave} for the case of $n \geq 3$, since the case of $n=2$ is basically the same.
For $n \geq 3$, let $f : \overline{A(R_1 , R_2)}\rightarrow \mathbb{R}$ be real analytic on $\overline{A(R_1 ,R_2)}$, which satisfies
\begin{equation}\label{veryoriginalvanishing}
\int_{A(R_1, R_2)} f \Vol_{\mathbb{R}^n} = 0.
\end{equation}
As required in the hypothesis of theorem \ref{TheMainTheoremWehave}, define $\widetilde{f} : \mathbb{R}^n \rightarrow \mathbb{R}$ as follows:
\begin{equation}\label{widetildef}
\widetilde{f}(x) = \mathbf{1}_{[R_1, R_2]} (|x|)
\Big \{ f (x) - \sum_{k=0}^2 \big ( \frac{-1}{R_1} \big )^k
\Big ( \sum_{a=1}^2 \mathcal{Q}_{k,a} \big ( \frac{R_2 - R_1}{R_1} \big )
f(R_a \frac{x}{|x|} )  \Big ) |x|^k \Big \} ,
\end{equation}
for all $x \in \mathbb{R}^n$. It is plain to see that $\widetilde{f}$ as given by \eqref{widetildef} satisfies the following conditions:
\begin{itemize}
\item $\widetilde{f} \in C_c^0 (\mathbb{R}^n)$.
\item $\supp \widetilde{f} \subset \overline{A( R_1, R_2)}$ , where $A( R_1, R_2 ) =\{ x \in \mathbb{R}^n : R_1 < |x| < R_2 \}$.
\item $\widetilde{f} \Big |_{\overline{A( R_1, R_2 )}}$ is real analytic on $\overline{A( R_1, R_2 )}$.
\end{itemize}
Observe that Lemma \ref{lem:s.4} immediately gives the following relation:
\begin{equation}\label{goodintegrationvanishing}
\int_{A(R_1 , R_2 )} \sum_{k=0}^2 \big ( \frac{-1}{R_1} \big )^k
\Big ( \sum_{a=1}^2 \mathcal{Q}_{k,a} \big ( \frac{R_2 - R_1}{R_1} \big )
f(R_a \frac{x}{|x|} )  \Big ) |x|^k \Vol_{\mathbb{R}^n}\big |_x   = 0 .
\end{equation}

As such, \eqref{veryoriginalvanishing} together with \eqref{goodintegrationvanishing} implies that $\widetilde{f}$ as given by \eqref{widetildef} satisfies the following property
\begin{equation}
\int_{A(R_1, R_2)} \widetilde{f} \Vol_{\mathbb{R}^n} = 0 .
\end{equation}
we summarize what we see as above and conclude that $\widetilde{f}$ satisfies all the hypothesis of corollary \ref{corollaryfromCohomology} for the case of $l=0$. Thus, corollary \ref{corollaryfromCohomology} can now be applied to $\widetilde{f}$ directly.
To actually apply corollary \ref{corollaryfromCohomology}, we now consider the vector field $\widetilde{v} : \mathbb{R}^n \rightarrow \mathbb{R}^n$ which is defined as follows:
\begin{equation}\label{Widetildevectorfield}
\widetilde{v} \big |_x = (-1)^{3n+1} \Big ( *_{\mathbb{R}^n} \Big \{ \int_{0}^1 \widetilde{f}(\tau x ) \tau^{n-1} \dd \tau \textbf{r}^*(\Vol_{S^{n-1}} ) \big |_x
- \dd_{\mathbb{R}^n} \Big ( \chi \textbf{r}^* \big ( \dd_{S^{n-1}}^* ( \widetilde{\varphi} \Vol_{S^{n-1}}) \big )  \Big ) \big |_x  \Big \} \Big )_{\sharp} ,
\end{equation}
for all $x \in \mathbb{R}^n$, where the cut off function $\chi$ is given by \eqref{cutoffanotherTWO}, and $\widetilde{\varphi} : S^{n-1} \rightarrow \mathbb{R}$ is the solution to \eqref{surfacePoissionFinal} on $S^{n-1}$ with $\int_{S^{n-1}} \widetilde{\varphi} \Vol_{S^{n-1}} = 0$. Now, we apply Corollary \ref{corollaryfromCohomology}, in the case of $l=0$, to $\widetilde{v}$ to deduce that $\widetilde{v}$ as given by \eqref{Widetildevectorfield} must satisfy the following properties:
\begin{itemize}
\item $\widetilde{v} \in C_c^1\big ( T\mathbb{R}^n \big )$
\item $\supp \widetilde{v} \subset \overline{A( R_1, R_2 )}$.
\item $\widetilde{v} \big |_{\overline{A( R_1, R_2 )}}$ is real analytic on $\overline{A( R_1 , R_2 )}$.
\end{itemize}
and that
\begin{equation}\label{helpfuldivergence1}
\dv_{\mathbb{R}^n} \widetilde{v} = \widetilde{f} ,
\end{equation}
holds on $\mathbb{R}^n$.
Next, we consider the real analytic function $\lambda : \mathbb{R}^n -\{O\} \rightarrow \mathbb{R}$ defined as follows:
\begin{equation}\label{Forlambda}
\begin{split}
\lambda (x) & = \frac{1}{|x|^{n-1}} \int_{R_1}^{|x|} \sum_{k=0}^2 \sum_{a=1}^2 \Big ( \frac{-1}{R_1} \Big )^k
\mathcal{Q}_{k,a} \big ( \frac{R_2-R_1}{R_1} \big ) f (R_a \frac{x}{|x|}) \rho^{n+k-1} \dd \rho \\
& =  \frac{1}{|x|^{n-1}} \sum_{k=0}^2 \sum_{a=1}^2 \Big ( \frac{-1}{R_1} \Big )^k
\mathcal{Q}_{k,a} \big ( \frac{R_2-R_1}{R_1} \big ) f (R_a \frac{x}{|x|}) \Big ( \frac{|x|^{n+k}-R_1^{n+k}}{n+k} \Big ) ,
\end{split}
\end{equation}
for all $x \in \mathbb{R}^n-\{O\}$. Note that the vector field $\frac{\partial}{\partial \rho }$ on $\mathbb{R}^n-\{O\}$ can be expressed as follows:
\begin{equation*}
\frac{\partial}{\partial \rho } \Big |_x = \frac{1}{|x|} \sum_{k=1}^n x^k \frac{\partial}{\partial x^k} \Big |_x ,
\end{equation*}
for all $x \in \mathbb{R}^n -\{O\}$. Thus, it is clear that $\lambda \frac{\partial}{\partial \rho}$ is real analytic on $\mathbb{R}^n-\{O\}$. A direct computation gives the following relation:
\begin{equation}\label{helpfuldivergence2}
\dv_{\mathbb{R}^n} \Big ( \lambda \frac{\partial}{\partial \rho } \Big ) (x)
= \sum_{k=0}^2 \sum_{a=1}^2 \Big ( \frac{-1}{R_1} \Big )^k \mathcal{Q}_{k,a} \big ( \frac{R_2-R_1}{R_1} \big ) f \big ( R_a \frac{x}{|x|} \big ) |x|^k ,
\end{equation}
for all $x \in \mathbb{R}^n-\{O\}$. Next, we note that an application of Lemma \ref{lem:s.4} immediately gives the following relation
\begin{equation}\label{goodvanishing}
\lambda (R_2 y) = \frac{1}{R_2^{n-1}} \int_{R_1}^{R_2} \sum_{k=0}^2 \sum_{a=1}^2 \Big ( \frac{-1}{R_1} \Big )^k \mathcal{Q}_{k,a} \big ( \frac{R_2-R_1}{R_1} \big ) f \big ( R_a y \big ) \rho^{n+k-1} \dd \rho = 0,
\end{equation}
for all $y \in S^{n-1}$. Now, \eqref{Forlambda} and \eqref{goodvanishing} together gives
\begin{equation}\label{Vanishneeded1}
\lambda \big |_{\partial \big ( A(R_1 ,R_2) \big )} = 0 .\\
\end{equation}
Next, we define the vector field $\textbf{U} : \overline{A(R_1, R_2)} \rightarrow \mathbb{R}^3$ as follows:
\begin{equation}\label{GoodU}
\textbf{U} = \lambda \frac{\partial}{\partial \rho} + \widetilde{v} ,
\end{equation}
with $\lambda$ be given by \eqref{Forlambda}, and $\widetilde{v}$ be given by \eqref{Widetildevectorfield}.

Now, \eqref{widetildef}, together with \eqref{helpfuldivergence1} and \eqref{helpfuldivergence2} imply the following relation
\begin{equation}\label{resultONE}
\dv_{\mathbb{R}^n} \textbf{U} = f ,
\end{equation}
holds on $\overline{A(R_1 , R_2)}$.

Next, the facts that $\widetilde{v} \in C_c^0 (\mathbb{R}^n)$, $\supp \widetilde{v} \subset \overline{A(R_1 , R_2 )}$, and \eqref{Vanishneeded1} together immediately implies that the vector field $\textbf{U}$ as given by \eqref{GoodU} must satisfy
\begin{equation}\label{resultTWO}
\textbf{U} \big |_{\partial \big ( A(R_1 ,R_2) \big )} = 0 .
\end{equation}
Finally, the structure of \eqref{Widetildevectorfield} and that of \eqref{Forlambda} clearly indiates that $\textbf{U}$ is real analytic on
$\overline{A(R_1, R_2)}$. Finally, we note that:
\begin{itemize}
\item  by a straightforward computation, one can easily check that \eqref{GoodU} is equivalent to \eqref{veryKeyexpressionWinWin}. This completes the proof of Theorem \ref{TheMainTheoremWehave} for the case of $n \geq 2$.
 \end{itemize}
The argument for the case of $n=2$ is basically the same. As such, we omit the details here.
\end{proof}

\newpage
\section{Appendix: comparison of Lemma 2.4 with Spivak proof of Theorem $9$ of Chapter $8$ in \cite{Spivak}.}\label{Appendix}
\begin{lem}\label{basiclemmaCohomology}
$($see Theorem $9$ of Chapter 8 on page $364$ of \cite{Spivak}$)$ Let $n\in \mathbb{Z}^+$, then, it follows that
\begin{equation}
H_c^n \big ( \mathbb{R}^n\big ) = \mathbb{R} ,
\end{equation}
where $H_c^n \big ( \mathbb{R}^n\big )$ is defined to be the quotient vector space $\frac{\mathcal{Z}^n(\mathbb{R}^n)}{\mathcal{B}^{n-1} (\mathbb{R}^n)}$, with $\mathcal{Z}^n(\mathbb{R}^n)$ is the space of all smooth $n$-forms on $\mathbb{R}^n$ with compact supports, and $\mathcal{B}^{n} (\mathbb{R}^n) = \big \{ d \eta : \eta \in C^{\infty} ( \wedge^{n-1} T^*\mathbb{R}^n ) \big \}$.
\end{lem}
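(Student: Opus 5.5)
The plan is to prove $H_c^n(\mathbb{R}^n)\cong\mathbb{R}$ by showing that the integration map
\[
I:\mathcal{Z}^n(\mathbb{R}^n)\to\mathbb{R},\qquad I(\omega)=\int_{\mathbb{R}^n}\omega ,
\]
is surjective and that its kernel is exactly $\{d\eta : \eta \text{ a compactly supported smooth } (n-1)\text{-form}\}$. (The compactly supported exact forms are the intended meaning of the denominator in the statement.)

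\textbf{Step 1: $I$ is well defined on the quotient and surjective.} Stokes' theorem gives $\int_{\mathbb{R}^n} d\eta=0$ for compactly supported $\eta$. Surjectivity follows by taking $\omega = c\,\psi\,\Vol_{\mathbb{R}^n}$ with $\psi$ a bump function of integral one.

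\textbf{Step 2: set up the homotopy primitive.} This step follows Spivak's route, which is exactly what Part 1 and Part 3 of Lemma \ref{keylemma} mimic. Let $\omega=f\,\Vol_{\mathbb{R}^n}$ with $\supp f\subset B_0(R)$ and $\int f=0$. Using the homotopy $F(t,x)=tx$, define
\[
\eta|_x=\int_0^1 f(\tau x)\tau^{n-1}\,\dd\tau\sum_{k}(-1)^{k+1}x^k\,\dd x^1\wedge\cdots\wedge\widehat{\dd x^k}\wedge\cdots\wedge \dd x^n .
\]
This form is smooth, and the Poincar\'e lemma computation gives $d\eta=\omega$.

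\textbf{Step 3: identify $\eta$ outside the support.} For $|x|\ge R$, the same rewriting as in \eqref{eq:remarkuse1} gives $\eta=\mathbf r^*(g\,\Vol_{S^{n-1}})$, where
\[
g(y)=\int_0^R f(\rho y)\rho^{n-1}\,\dd\rho .
\]
In polar coordinates, $\int_{S^{n-1}} g=\int f=0$.

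\textbf{Step 4 (the main obstacle): correct $\eta$ to a compactly supported primitive.} The difficulty is the cohomological input that a top form on $S^{n-1}$ with zero integral is exact. I would obtain it either by induction on $n$, using the statement for $S^{n-1}$ via $H^{n-1}(S^{n-1})\cong\mathbb{R}$ as Spivak does, or by Hodge theory as in Lemma \ref{keylemma}: solve $-\Delta_{S^{n-1}}\varphi=g$ and set $\beta=d^*(\varphi\Vol_{S^{n-1}})$, so that $d\beta=g\Vol_{S^{n-1}}$. For $n=1$ this is trivial, since $\int f=0$ directly gives a compactly supported primitive.

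Now choose a smooth cutoff $\chi$ with $\chi=0$ near $0$ and $\chi=1$ for $|x|\ge R$, and put
\[
\eta'=\eta-d\bigl(\chi\,\mathbf r^*\beta\bigr).
\]
Then $d\eta'=\omega$. For $|x|\ge R$ we have $\eta'=\mathbf r^*(d\beta)-d\,\mathbf r^*\beta=0$. Hence $\eta'$ is compactly supported, so $\ker I$ consists of compactly supported exact forms. Combined with Step 1, this gives $H_c^n(\mathbb{R}^n)\cong\mathbb{R}$.
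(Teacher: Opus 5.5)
Your proposal is correct and follows essentially the argument the paper relies on: it is Spivak's construction as summarized in Lemma \ref{Goodmotivation} (homotopy primitive, identification with $\mathbf{r}^*(g\,\Vol_{S^{n-1}})$ outside the ball, exactness of zero-integral top forms on $S^{n-1}$, subtraction of $\dd(\chi\,\mathbf{r}^*\beta)$), and your Hodge-theoretic option for the sphere step is exactly the substitution made in Lemma \ref{keylemma}. You are also right to read the denominator as $\{\dd\eta : \eta \text{ compactly supported}\}$, since if $\eta$ is only required to be smooth, every $n$-form on $\mathbb{R}^n$ is exact and the quotient would be $0$.
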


In particular, if we specifically focus on the details of the proof of Lemma \ref{basiclemmaCohomology} as given in Spivak book \cite{Spivak}, Spivak's method of proof of Theorem $9$ of Chapter $8$ in his book \cite{Spivak} gives exactly the following more precise result, which in turns gives Lemma \ref{basiclemmaCohomology} as a straightforward consequence.

\begin{lem}\label{Goodmotivation}
\textbf{$($Spivak's result and his method$)$}$($see the original argument by Spivak shown in page $364$ to page $367$ of \cite{Spivak}$)$
Let $n \in \mathbb{Z}^+$. Consider any real valued function $f \in C_c^{\infty} \big ( \mathbb{R}^n \big )$ which satisfies $\supp f \subset \overline{B_O(R)}$ for some $R> 0$ and that $\int_{B_O(R)} f \Vol_{\mathbb{R}^n} = 0$.
Define the real-valued function $g_1 : S^{n-1} \rightarrow \mathbb{R}$ on the standard unit sphere by
\begin{equation}\label{gONE}
g_1(y) = \int_{0}^{R} f (\rho y) \rho^{n-1} \dd \rho,
\end{equation}
for all $y \in S^{n-1}$. The fact
\begin{equation}
\int_{S^{n-1}} g_1 (y) \Vol_{S^{n-1}} = \int_{B_O(R)} f \Vol_{\mathbb{R}^n} = 0
\end{equation}
ensures the existence of some smooth $(n-2)$-form $\eta$ on $S^{n-1}$ for which
\begin{equation}\label{ugyeta}
g_1 \Vol_{S^{n-1}} = \dd_{S^{n-1}} \eta
\end{equation}
holds on $S^{n-1}$.
Define a smooth $(n-1)$-form $\Psi_1$ on $\mathbb{R}^n$ as follows.
\begin{equation}\label{PsiONE}
\begin{split}
\Psi_1 \Big |_x = & \Big ( \int_0^{|x|} f \big ( t \frac{x}{|x|} \big )  t^{n-1} \dd t  \Big ) \frac{1}{|x|^n} \sum_{k=1}^n (-1)^{k-1} x^k \dd x^1 \wedge \dd x^2... \dd x^{k-1}\wedge \dd x^{k+1}...\wedge \dd x^n \\
& - \dd_{\mathbb{R}^3} \Big ( \rho_1 \mathbf{r}^* ( \eta ) \big ) \Big ) ,
\end{split}
\end{equation}
with $\mathbf{r} : \mathbb{R}^n -\{O\} \rightarrow S^{n-1}$ be the standard retraction map $\mathbf{r}(x) = \frac{x}{|x|}$, and $\rho_1 : \mathbb{R}^n \rightarrow \mathbb{R}$ is any smooth cut-off function satisfying the following constraint
\begin{itemize}
\item $0 \leq \rho_1 (x) \leq 1$ holds for all $x \in \mathbb{R}^n$ satisfying $\frac{R}{2} \leq |x| \leq R$.
\item $\rho_1 (x) =0$, for $x \in \mathbb{R}^n$ satisfying $|x| \leq \frac{R}{2}$.
\item $\rho_1 (x) =1$, for $x \in \mathbb{R}^n$ satisfying $|x| \geq  R$.
\end{itemize}
 then, it follows that $\Psi_1 \in C_c^{\infty} \Big (  \bigwedge^{n-1} T^*\mathbb{R}^n  \Big )$ satisfies $\supp \Psi_1 \subset \overline{B_O(R)}$ and $\dd \Psi_1 = f \Vol_{\mathbb{R}^n}$ holds on $\mathbb{R}^n$.
\end{lem}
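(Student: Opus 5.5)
We prove Lemma \ref{Goodmotivation} by following Spivak's homotopy-operator argument, in three steps: a primitive $\eta_0$ of $f\Vol_{\mathbb{R}^n}$, the existence of $\eta$ on the sphere, and the cut-off correction that makes the support compact.

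\textbf{Step 1: the primitive $\eta_0$.} Let $\eta_0$ denote the first term of \eqref{PsiONE}. Substituting $t=\tau|x|$, it can be rewritten as
\[
\eta_0\big|_x=\Big(\int_0^1 f(\tau x)\tau^{n-1}\,\dd\tau\Big)\sum_{k=1}^n(-1)^{k-1}x^k\,\dd x^1\wedge\cdots\wedge\widehat{\dd x^k}\wedge\cdots\wedge\dd x^n .
\]
This is exactly the Poincar\'e homotopy operator $\int_0^1 F_t^*(\iota_{\partial_t}\,\cdot)$ applied to $F^*(f\Vol_{\mathbb{R}^n})$, with $F(t,x)=tx$. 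Since the coefficient $\int_0^1 f(\tau x)\tau^{n-1}\,\dd\tau$ is smooth in $x$ (differentiate under the integral sign), $\eta_0$ is smooth on all of $\mathbb{R}^n$, including the origin.

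We then check $\dd\eta_0=f\Vol_{\mathbb{R}^n}$ directly. Write $c(x)=\int_0^1 f(\tau x)\tau^{n-1}\,\dd\tau$. Then
\[
\dd\eta_0=\big(n\,c(x)+x\cdot\nabla c(x)\big)\Vol_{\mathbb{R}^n}.
\]
Moreover,
\[
n\,c+x\cdot\nabla c=\int_0^1 \frac{\dd}{\dd\tau}\big(\tau^n f(\tau x)\big)\,\dd\tau=f(x),
\]
which gives the identity.

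\textbf{Step 2: $\eta_0$ outside $B_O(R)$, and existence of $\eta$.} For $|x|\ge R$, the hypothesis $\supp f\subset\overline{B_O(R)}$ turns the radial integral into $g_1(x/|x|)$. Together with the identity $\mathbf{r}^*\Vol_{S^{n-1}}=|x|^{-n}\sum_k(-1)^{k-1}x^k\,\dd x^1\wedge\cdots\wedge\widehat{\dd x^k}\wedge\cdots\wedge\dd x^n$, this gives $\eta_0=\mathbf{r}^*(g_1\Vol_{S^{n-1}})$ there.

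Next, polar coordinates give $\int_{S^{n-1}}g_1\Vol_{S^{n-1}}=\int f\Vol_{\mathbb{R}^n}=0$. By de Rham's theorem $H^{n-1}(S^{n-1})\cong\mathbb{R}$ via integration, so $g_1\Vol_{S^{n-1}}$ is exact. This yields $\eta$ with \eqref{ugyeta}. Alternatively, for $n\ge 3$ one can solve $-\Delta_{S^{n-1}}\varphi=g_1$ and take $\eta=\dd^*(\varphi\Vol_{S^{n-1}})$ as in Lemma \ref{keylemma}. The case $n=1$ is degenerate and is handled by the elementary primitive $\int_{-\infty}^x f$.

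\textbf{Step 3: the corrected form $\Psi_1$.} Set $\Psi_1=\eta_0-\dd(\rho_1\mathbf{r}^*\eta)$.

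Smoothness: $\rho_1$ vanishes near the origin, so $\rho_1\mathbf{r}^*\eta$ extends by zero to a smooth form on $\mathbb{R}^n$. Hence $\Psi_1$ is smooth, provided $\rho_1$ is taken smooth, as stated.

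Equation: since $\dd^2=0$, we get $\dd\Psi_1=\dd\eta_0=f\Vol_{\mathbb{R}^n}$.

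Support: for $|x|\ge R$ we have $\rho_1=1$, so
\[
\Psi_1=\mathbf{r}^*(g_1\Vol_{S^{n-1}})-\mathbf{r}^*(\dd\eta)=0,
\]
using that pullback commutes with $\dd$. Thus $\supp\Psi_1\subset\overline{B_O(R)}$, and in particular $\Psi_1$ is compactly supported.

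The only delicate points are the smoothness of $\eta_0$ at the origin, which the $\tau$-form of the coefficient settles, and the existence of $\eta$, which uses the top cohomology of the sphere. Everything else is direct computation.
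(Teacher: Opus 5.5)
Your proof is correct and follows essentially the same route the paper attributes to Spivak and reproduces in the proof of Lemma \ref{keylemma}. That route is: the radial homotopy-operator primitive $\eta_0$ with $\dd\eta_0=f\Vol_{\mathbb{R}^n}$; its identification with $\mathbf{r}^*(g_1\Vol_{S^{n-1}})$ outside $B_O(R)$; exactness of $g_1\Vol_{S^{n-1}}$ via $H^{n-1}(S^{n-1})\cong\mathbb{R}$; and the cut-off correction $-\dd(\rho_1\mathbf{r}^*\eta)$. The only difference is that the paper simply cites $\dd\eta_0=f\Vol_{\mathbb{R}^n}$ from Spivak, while you verify it directly through $n\,c+x\cdot\nabla c=f$.
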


\begin{remark}
It is important to note that: Spivak gave a rigorous proof $($see the proof of Theorem $9$ of Chapter 8 from pages $364$ to $367$ of \cite{Spivak}$)$ of all the results stated in Lemma \ref{Goodmotivation}, even though Spivak did not phrase his result in the format as demonstrated in Lemma \ref{Goodmotivation}. Instead, Spivak simply summarize his result in the loose form of Lemma \ref{basiclemmaCohomology}. The reason is probably that Spivak wants to stress the cohomological aspect of the subject matter, and the explicit form of the expression \eqref{PsiONE} is not of great interest to him for further development in his books. However, in this paper, we need to \emph{compare} Spivak result as stated in Lemma \ref{Goodmotivation} with our Lemma \ref{keylemma} , we have to first summarize the method of Spivak $($as well as highlighting all the essential steps of Spivak method$)$ in the form of Lemma \ref{Goodmotivation}.
\end{remark}

Here, we need a couples of remarks to clarify the meaning and significance of Lemma \ref{keylemma} for the development of this research proposal.

\begin{remark}\label{rmk:lastRemark}
By a brief inspection, the reader of this paper can see that the structure of the statement of Lemma \ref{keylemma} basically follows the line of thought as demonstrated in Spivak's result Lemma \ref{Goodmotivation}. However, there are \emph{two sharp differences} between Lemma \ref{Goodmotivation} and Lemma \ref{keylemma}.

\begin{itemize}
\item The first new idea which we provided in the establishment of Lemma \ref{keylemma} is the use of a solution $\varphi$ to the Poisson equation $-\Delta_{S^{n-1}} C = g_2$, which allows us to replace the very arbitrary smooth $(n-2)$-form $\eta$ as involved in relations \eqref{ugyeta} and \eqref{PsiONE} in Lemma \ref{Goodmotivation} by the much more precise real analytic $(n-2)$-form $\dd_{S^{n-1}}^* \big ( \varphi \Vol_{S^{n-1}} \big )$ as involved in \eqref{eq:14} of Lemma \ref{keylemma}.
\item The second new idea which we provided in Lemma \ref{keylemma} is the use of a new cut off function $\chi$ with the structure of $\chi \big |_{\overline{A(\pi , 2 \pi )}}$ coincides with the restriction of the elementary real analytic function ${ \sin^{l+3}\Big ( \frac{ \pi}{2(R_2-R-1)} (|x|-R_1 )\Big ) }$ on $\overline{A(R_1 , R_2 )}$.
\end{itemize}
The above two points \emph{fully explains} the new contributions which we input into the establishment of Lemma \ref{keylemma}. Other than these two novel ingredients which we give, the logical structure of the proof of Lemma \ref{keylemma} essentially follows the one as given by Spivak in his proof for Lemma \ref{Goodmotivation}.
\end{remark}

\end{document}